\newcommand{\NN}{\mathbb{N}} 
\newcommand{\ZZ}{\mathbb{Z}} 
\newcommand{\CC}{\mathbb{C}} 
\newcommand{\PP}{\mathbb{P}} 
\newcommand{\tgsp}{\mathrm{T}} 
\newcommand{\arr}{\mathcal{A}} 
\newcommand{\art}{\arr^T} 
\newcommand{\arh}{\arr^H} 
\newcommand{\TT}{T} 
\newcommand{\kk}{\mathcal{K}} 
\newcommand{\MM}{\mathcal{M}} 
\newcommand{\GG}{\mathcal{G}} 
\newcommand{\NS}{\mathcal{S}} 
\newcommand{\cpl}{\mathcal{M}} 
\newcommand{\VV}{\mathcal{V}} 
\newcommand{\EE}{\mathcal{E}} 
\newcommand{\st}{\mid} 
\newcommand{\card}[1]{\#{#1}}
\newcommand{\rest}[1]{|_{#1}} 
\newcommand{\Bl}[2]{\operatorname{Bl}_{#1}{#2}} 
\newcommand{\YY}{\mathcal{Y}}
\newcommand{\Yh}{\mathcal{Y}^H}
\newcommand{\Yt}{\mathcal{Y}^T}
\newcommand{\dt}[1]{\widetilde{#1}} 
\newcommand{\play}{\mathcal{C}} 
\newcommand{\susp}[1]{\widehat{#1}}
\DeclareMathOperator{\supp}{supp}
\DeclareMathOperator{\codim}{codim}
\DeclareMathOperator{\poin}{Poin}
\renewcommand{\bar}{\overline}
\DeclareMathOperator{\inv}{inv}
\DeclareMathOperator{\lec}{lec}
\theoremstyle{definition}
\newtheorem{defn}{Definition}[section]
\theoremstyle{plain}
\newtheorem{thm}[defn]{Theorem}
\newtheorem{lemma}[defn]{Lemma}
\newtheorem{prop}[defn]{Proposition}
\newtheorem{crl}[defn]{Corollary}
\theoremstyle{remark}
\newtheorem{rmk}[defn]{Remark}
\newtheorem{exam}[defn]{Example}
\let\@@pmod\pmod
\DeclareRobustCommand{\pmod}{\@ifstar\@pmods\@@pmod}
\def\@pmods#1{\mkern5mu({\operator@font mod}\mkern 6mu#1)}
\let\@@mod\mod
\DeclareRobustCommand{\mod}{\@ifstar\@mods\@@mod}
\def\@mods#1{\mkern5mu{\operator@font mod}\mkern 6mu#1}
\numberwithin{equation}{section}
\patchcmd{\@setaddresses}{\indent}{\noindent}{}{}
\patchcmd{\@setaddresses}{\indent}{\noindent}{}{}
\patchcmd{\@setaddresses}{\indent}{\noindent}{}{}
\patchcmd{\@setaddresses}{\indent}{\noindent}{}{}
\title[An isomorphism between models of graphic arrangements]{An isomorphism between projective models of toric and hyperplane graphic arrangements}
\author{Giovanni Gaiffi}
\address{Giovanni Gaiffi\newline
Dipartimento di Matematica\newline
Università di Pisa\newline
Largo B.\ Pontecorvo, 5\newline
56127 Pisa, Italy}
\email{giovanni.gaiffi@unipi.it}
\author{Oscar Papini}
\address{Oscar Papini\newline
Istituto di Scienza e Tecnologie dell’Informazione ``A. Faedo''\newline
Consiglio Nazionale delle Ricerche\newline
Via G.\ Moruzzi, 1\newline
56124 Pisa, Italy}
\email{oscar.papini@isti.cnr.it}
\author{Viola Siconolfi}
\address{Viola Siconolfi\newline
Dipartimento di Meccanica, Matematica e Management\newline
Politecnico di Bari\newline
Via E.\ Orabona, 4\newline
70126 Bari, Italy}
\email{viola.siconolfi@poliba.it}
\date{}
\begin{document}

\maketitle

\begin{abstract}
This paper presents a bridge between the theories of wonderful models associated with toric arrangements and wonderful models associated with hyperplane arrangements. In a previous work, the same authors noticed that the model of the toric arrangement of type $A_{n-1}$ is isomorphic to the one of the hyperplane arrangement of type $A_{n}$; it is natural to ask if there exist similar isomorphisms between other families of arrangements. The aim of this paper is to study one such family, namely the family of arrangements defined by graphs. The main result states that there is indeed an isomorphism between the model of a toric arrangement defined by a graph $\Gamma$ and the model of a hyperplane arrangement defined by the cone of $\Gamma$, provided that a suitable building set is chosen.

\smallskip
\noindent\textsc{Keywords.} Toric arrangements, Compact models, Configuration spaces
\end{abstract}

\section{Introduction}
In this paper we highlight a bridge between wonderful models of toric arrangements and wonderful models of subspace arrangements. In particular we point out  some cases where  the  wonderful model of a toric arrangement is isomorphic to the wonderful model of an associated subspace arrangement. In these cases both the involved arrangements are graphic arrangements. 

\subsection{Historical sketches}
A \emph{subspace arrangement} is a collection of subspaces in a vector space $V$. Analogously, a \emph{toric arrangement} is a collection of subtori in an algebraic torus $T$. In both cases, in this paper we will assume that the base field is $\CC$. We say that an arrangement is divisorial if all its elements have codimension 1. A \emph{wonderful model} for an arrangement is a variety whose big open dense set is isomorphic to the complement of the arrangement and whose boundary is a divisor with normal crossings. More precise definitions will be provided in Section~\ref{sec:models}.

The construction of wonderful models of subspace arrangements was first described in the seminal papers~\cite{wonderful1,wonderful2}. The initial motivation was the study of Drinfeld's construction in~\cite{drinfeld} of special solutions of the Knizhnik-Zamolodchikov equations with some prescribed asymptotic behavior, but it was soon pointed out that the models are geomtric objects with their own great interest. For instance in the case of a complexified root arrangement of type $A_n$ the minimal model coincides with the moduli spaces of stable curves of genus 0 with $n+2$ marked points. 

In~\cite{wonderful1} De Concini and Procesi showed, using a description of the cohomology rings of the projective wonderful models to give an explicit presentation of a Morgan algebra, that the mixed Hodge numbers and the rational homotopy type of the complement of a complex subspace arrangement depend only on the intersection lattice (viewed as a ranked poset). The cohomology rings of the models of complex subspace arrangements were also studied in~\cite{yuzvinsky,gaiffiselecta} and, in the real case, in \cite{ehkr,rains}. The case of arrangements associated with complex reflection groups was studied from different point of views in~\cite{hendersonwreath}  and in~\cite{callegarogaiffilochak}. 
 
The connections between the geometry of these models and the Chow rings of matroids were pointed out first in~\cite{chow} and then in~\cite{adiprasito}, where they also played a crucial role in the study of some relevant log-concavity problems. The relations with toric and tropical geometry were enlightened for instance in~\cite{feichtnersturmfels,denham,amini}.
 
The study of toric arrangements started in~\cite{looijenga}.  In~\cite{deconciniprocesivergne} and~\cite{DCP3} the role of toric arrangements as a link between partition functions and box splines is pointed out. In~\cite{CDDMP}, it was shown, extending the results in~\cite{calledelu,callegaro2019erratum} and~\cite{Pagariatwo}, that the data needed in order to state the presentation of the rational cohomology ring of the complement $\MM(\arr)$ of a toric arrangement $\arr$ is fully encoded in the poset given by all the connected components of the intersections of the layers. It follows that in the divisorial case the combinatorics of this poset determines the rational homotopy of $\MM(\arr)$.

One of the motivations for the construction of \emph{projective wonderful models of a toric arrangement} $\arr$ in~\cite{deconcgaiffi1}, in addition to the interest in their own geometry, was that they could be an important tool to explore the generalization of the above mentioned results to the non-divisorial case.

Indeed the presentation of the cohomology ring of these models described in~\cite{deconcgaiffi2} was used in~\cite{mocipaga} to construct a Morgan differential algebra which determines the rational homotopy type of $\MM(\arr)$. We notice that these models, and therefore their associated Morgan algebras, depend not only on the initial combinatorial data, but also on some choices. In~\cite{deconcgaiffi3} a new differential graded algebra was constructed as a direct limit of the above mentioned differential Morgan algebras: it has a presentation which depends only on a set of initial discrete data extracted from $\arr$, and it can be used to prove that in the non-divisorial case the rational homotopy type of $\MM(\arr)$ depends only on these data.

\subsection{A bridge between the two families of models}
It is well-known that a root system defines both a hyperplane arrangement and a toric arrangement, so it is natural to study the projective wonderful models associated with them. In~\cite{gps}, the authors noticed that the model of the toric arrangement of type $A_{n-1}$ is isomorphic to the one of the hyperplane arrangement of type $A_n$. In this work we extend this result 
by studying a wider family of arrangements, namely the graphic arrangements (see Definitions~\ref{defn_hyp_graph_arr} and \ref{defn_tor_graph_arr}). In particular, given a graph $\Gamma$ we consider the toric arrangement defined by $\Gamma$ and the hyperplane arrangement defined by the cone of $\Gamma$ (see Definition~\ref{def:conegraph}). It turns out that, by choosing a convenient building set, the wonderful models associated with the two arrangements are isomorphic. The proof that we present relies on two key observations:
\begin{enumerate}
\item both their complements can be realised as the complement of the same projective hyperplane arrangement in a suitable projective space;
\item the constructions of both the toric and the hyperplane models can be translated as the construction of the model associated with this projective hyperplane arrangement, with the only difference being the order of the blowups of the elements of the building set.
\end{enumerate}

\subsection{Structure of the paper}
The paper is structured as follows: in Section~\ref{sec:models} we recall the definitions and constructions related to projective models associated with hyperplane and toric arrangements; in Section~\ref{sec:graphic_arr} we introduce the graphic arrangements and study the construction of the models associated with the toric and hyperplane graphic arrangements, focusing our attention on the choice of the building set; Section~\ref{sec:isomorphism_models} is devoted to our main result, namely the proof of the isomorphism between the model of the toric arrangement associated with a graph $\Gamma$ and the one of the hyperplane arrangement associated with the cone of $\Gamma$; Section~\ref{sec:examples} concludes the paper by showing how our main result can be applied to particular families of graphic arrangements to uncover interesting equalities between known combinatorial objects.
\begin{rmk}
We will use the two superscripts $(\ )^T$ and $(\ )^H$ to denote the objects relative to the fields of toric and hyperplane arrangements respectively.
\end{rmk}

\section{Compact models of hyperplane and toric arrangements}\label{sec:models}
Both hyperplane/subspace arrangements and toric ones can be seen as special cases of \emph{arrangements of subvarieties}. Following~\cite{li}, in this section we recall the main definitions and constructions of projective wonderful models in this more general setting, and afterwards we translate them in the context of the hyperplane and toric arrangements.
\begin{defn}[{see~\cite[Definition~2.1]{li}}]\label{def:arrsubvar}
Let $X$ be a non-singular algebraic variety. An \emph{arrangement of subvarieties} of $X$ is a finite set $\Lambda$ of non-singular closed connected subvarieties properly contained in $X$ such that
\begin{enumerate}
\item for every two $\Lambda_i,\Lambda_j\in\Lambda$, either $\Lambda_i\cap\Lambda_j$ is a disjoint union of elements of $\Lambda$ or $\Lambda_i\cap\Lambda_j=\emptyset$;
\item if $\Lambda_i\cap\Lambda_j\neq\emptyset$, the intersection is \emph{clean}, i.e.\ it is non-singular and for every $y\in\Lambda_i\cap\Lambda_j$ we have the following conditions on the tangent spaces:
\[
\tgsp_y(\Lambda_i\cap\Lambda_j)=\tgsp_y(\Lambda_i)\cap\tgsp_y(\Lambda_j).
\]
\end{enumerate}
\end{defn}
We denote by $\cpl(\Lambda)$ the complement of the arrangement $\Lambda$ in $X$, i.e.
\[
\cpl(\Lambda)\coloneqq X\setminus\bigcup_{\Lambda_i\in\Lambda}\Lambda_i.
\]

If for every two $\Lambda_i,\Lambda_j\in\Lambda$ the intersection $\Lambda_i\cap\Lambda_j$ is either empty or connected, the arrangement is called \emph{simple}. For the rest of this paper we will only deal with simple arrangements, therefore from now on this hypothesis will always be implicitly assumed.

The key ingredient for the construction of the model is the notion of building set, of which we give two definitions (see~\cite[Definition~2.2]{li}).
\begin{defn}\label{def:building_arr}
Let $\Lambda$ be an arrangement of subvarieties. A subset $\GG\subseteq\Lambda$ is a \emph{building set} for $\Lambda$ if for every $L\in\Lambda\setminus\GG$ the minimal elements (w.r.t.\ the inclusion) of the set $\{G\in\GG\st L\subset G\}$ intersect transversally and their intersection is $L$. These minimal elements are called the \emph{$\GG$-factors} of $L$.
\end{defn}
\begin{defn}\label{def:building}
Let $\GG$ be a set of connected, closed, non-singular subvarieties of a variety $X$ and let $\Lambda(\GG)$ be the set of all the connected components of all the possible non-empty intersections of elements of $\GG$ (i.e.\ the arrangement of subvarieties \emph{induced} by $\GG$). We say that $\GG$ has the property of being \emph{building} if it is building for $\Lambda(\GG)$ according to Definition~\ref{def:building_arr}.
\end{defn}
\begin{rmk}
Definition~\ref{def:building} remarks that being building is an intrinsic property of a set of subvarieties. In fact, while the induced arrangement is unique for a chosen set of subvarieties $\GG$, there may exist different building sets for a chosen arrangement $\Lambda$---however, if $\GG$ is building for $\Lambda$, then $\GG$ is also building according to Definition~\ref{def:building} since in this case $\Lambda(\GG)=\Lambda$.
\end{rmk}
Finally, starting from a non-singular variety $X$, an arrangement of subvarieties $\Lambda$ and a building set $\GG$ for $\Lambda$, consider the locally closed embedding
\begin{equation}\label{eq:model_embedding}
\cpl(\Lambda) \longrightarrow \prod_{G\in \GG}\Bl{G}{X}
\end{equation}
where $\Bl{G}{X}$ is the blowup of $X$ along $G$.
\begin{defn}[{see~\cite[Definition~1.1]{li}}]\label{def:wonderfulmodel}
The closure of the image of the morphism~\eqref{eq:model_embedding} is the \emph{wonderful model} associated with $X$, $\Lambda$ and $\GG$ and it is denoted by $\YY(X,\Lambda,\GG)$. If $\Lambda$ is the arrangement induced by $\GG$ we will simply write $\YY(X,\GG)$.
\end{defn}

\begin{thm}[{see~\cite[Theorem~1.3]{li}}]\label{thm:iterblowup}
Let $\GG$ be a building set in an algebraic variety $X$. Let us order the elements $G_1,\dotsc,G_m$ of $\GG$ in such a way that for every $1\leq k \leq m$ the set $\GG_k\coloneqq\{G_1,\dotsc,G_k\}$ is building. Then if we set $X_0\coloneqq X$ and $X_k\coloneqq \YY(X,\GG_k)$ for $1\leq k\leq m$, we have
\[
X_k=\Bl{\dt{G_k}}{X_{k-1}},
\]
where $\dt{G_k}$ denotes the dominant transform of $G_k$ in $X_{k-1}$. In particular $\YY(X,\GG)$ is obtained for $k=m$.
\end{thm}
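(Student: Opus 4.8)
The plan is to argue by induction on $k$, reducing everything to the analysis of a single blowup. Recall that the \emph{dominant transform} of a subvariety $V$ under a blowup $\pi\colon\Bl{Z}{Y}\to Y$ is the strict transform of $V$ when $V\not\subseteq Z$ and the full preimage $\pi^{-1}(V)$ when $V\subseteq Z$, applied stage by stage for an iterated blowup. The base case $k=1$ is immediate: by Definition~\ref{def:wonderfulmodel} the model $\YY(X,\GG_1)$ is the closure of the image of $\cpl(\{G_1\})=X\setminus G_1$ inside $\Bl{G_1}{X}$, and since this complement is dense and the blowup is an isomorphism over it, the closure is all of $\Bl{G_1}{X}=\Bl{\dt{G_1}}{X_0}$.

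For the inductive step suppose $X_{k-1}=\YY(X,\GG_{k-1})$ has already been realised as the closure of the image of $\cpl(\Lambda(\GG_{k-1}))$ inside $P_{k-1}\coloneqq\prod_{i=1}^{k-1}\Bl{G_i}{X}$, write $\pi\colon X_{k-1}\to X$ for the iterated blowdown, and let $\dt{G_k}\subseteq X_{k-1}$ be the dominant transform of $G_k$. First I would produce the natural morphism: the projection $P_k=P_{k-1}\times\Bl{G_k}{X}\to P_{k-1}$ carries the image of $\cpl(\Lambda(\GG_k))$ under its defining embedding into the image of $\cpl(\Lambda(\GG_{k-1}))$, so after passing to closures it restricts to a morphism $p\colon\YY(X,\GG_k)\to X_{k-1}$. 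Both $\YY(X,\GG_k)$ and $X_{k-1}$ are projective over $X$, so $p$ is proper; and since $\cpl(\Lambda(\GG_k))$ is a dense open subset of both of them, on which $p$ restricts to an isomorphism, $p$ is birational.

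The core of the argument is to identify $p$ with the blowup of $X_{k-1}$ along $\dt{G_k}$. On one side, $\mathcal{I}_{G_k}\cdot\mathcal{O}_{\YY(X,\GG_k)}$ is invertible, being pulled back from $\Bl{G_k}{X}$ through the last projection. The crucial local fact is an identity of ideal sheaves on $X_{k-1}$,
\[
\mathcal{I}_{G_k}\cdot\mathcal{O}_{X_{k-1}}\;=\;\mathcal{I}_{\dt{G_k}}\cdot\mathcal{L},
\]
where $\mathcal{L}$ is invertible (a product of powers of the exceptional divisor ideals of the first $k-1$ blowups); granting it, pulling back to $\YY(X,\GG_k)$ shows $\mathcal{I}_{\dt{G_k}}\cdot\mathcal{O}_{\YY(X,\GG_k)}$ invertible, so by the universal property of the blowup $p$ factors through a morphism $q\colon\YY(X,\GG_k)\to\Bl{\dt{G_k}}{X_{k-1}}$. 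Conversely, on $\Bl{\dt{G_k}}{X_{k-1}}$ the ideal $\mathcal{I}_{\dt{G_k}}$ is invertible and $\mathcal{L}$ remains invertible, hence $\mathcal{I}_{G_k}$ pulls back to an invertible ideal; the universal property of $\Bl{G_k}{X}$ then gives a morphism $\Bl{\dt{G_k}}{X_{k-1}}\to\Bl{G_k}{X}$, which together with the blowdown $\Bl{\dt{G_k}}{X_{k-1}}\to X_{k-1}\hookrightarrow P_{k-1}$ yields a morphism into $P_k$. As it agrees with the defining embedding of $\YY(X,\GG_k)$ on the dense open $\cpl(\Lambda(\GG_k))$, this morphism lands in $\YY(X,\GG_k)$ and defines an inverse $r$ to $q$ (equality of $q\circ r$ and $r\circ q$ with the identity on a dense open subset forces it everywhere, by separatedness). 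Hence $X_k=\YY(X,\GG_k)\cong\Bl{\dt{G_k}}{X_{k-1}}$, closing the induction; taking $k=m$ gives the last assertion.

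The main obstacle is exactly the displayed ideal identity, together with the implicit claim that $\dt{G_k}$ is non-singular: this is the point where the hypothesis that the ordering keeps every initial segment $\GG_j$ building must be converted into geometry, since for a careless order the dominant transform of a later element can acquire singularities. I would establish it by a secondary induction on $j\leq k-1$, tracking how $\mathcal{I}_{G_k}$ transforms under each blowup $X_j\to X_{j-1}$ in local coordinates adapted to the centre $G_j$ and to the current transform of $G_k$. The clean-intersection axiom of Definition~\ref{def:arrsubvar} and the building-set axiom of Definition~\ref{def:building_arr} should guarantee that at every stage the centre and the transform of $G_k$ sit in one of the controlled local positions---one contained in the other, transverse coordinate subspaces, or disjoint---which is precisely what keeps the strict transform smooth and the transformed ideal monomial in the exceptional divisors. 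The remaining ingredients---properness, birationality, and the gluing of the inverse morphism---are then formal.
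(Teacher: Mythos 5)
This theorem is not proved in the paper at all: it is imported verbatim from Li (\cite[Theorem~1.3]{li}), so there is no in-paper argument to compare yours against. Judged on its own, your skeleton is the standard one and matches the strategy of Li's actual proof: induct on $k$, produce the proper birational morphism $p\colon\YY(X,\GG_k)\to\YY(X,\GG_{k-1})$ from the projection of the ambient products, and identify $p$ with $\Bl{\dt{G_k}}{X_{k-1}}$ by running the universal property of the blowup in both directions. The formal parts (base case, properness, birationality, the gluing of the two universal-property morphisms into mutually inverse maps via density and separatedness) are fine as sketched.

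The genuine gap is that you have reduced the theorem to exactly its hard content and then only gestured at it. The displayed identity $\mathcal{I}_{G_k}\cdot\mathcal{O}_{X_{k-1}}=\mathcal{I}_{\dt{G_k}}\cdot\mathcal{L}$ with $\mathcal{L}$ invertible, together with the non-singularity of $\dt{G_k}$ (and, for the induction to continue, the fact that the dominant transforms of $\Lambda(\GG)$ again form an arrangement of subvarieties of $X_{k-1}$ for which the transforms of $\GG\setminus\GG_{k-1}$ behave as a building set), is precisely where the hypothesis that every initial segment $\GG_j$ is building must be converted into geometry; without it the identity is simply false (a ``wrong'' order can produce a singular or excess-dimensional transform, and the total-transform ideal then fails to factor as stated). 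Your plan to ``track the ideal in local coordinates adapted to the centre'' and your assertion that the clean-intersection and building axioms ``should guarantee'' one of three controlled local positions is a statement of what must be proved, not a proof: establishing those local normal forms (Li's condition~$(*)$ charts and his Proposition~2.8 on how arrangements and building sets transform under one blowup) occupies most of Section~2 of \cite{li}. As it stands the proposal is an accurate reduction of the theorem to its key lemma, with that lemma left open.
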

\begin{rmk}
\begin{enumerate}
\item Any total ordering of the elements of a building set $\GG=\{G_1,\dotsc,G_m\}$ which refines the ordering by inclusion, that is $i<j$ if $G_i\subset G_j$, satisfies the condition of Theorem~\ref{thm:iterblowup}.
\item It follows from the preceding constructions that $\cpl(\Lambda)$ is the open dense set of $\YY(X,\Lambda,\GG)$ and that the boundary of $\YY(X,\Lambda,\GG)$ is the union of the non-singular irreducible divisors $D_G$ provided by the transforms of every $G\in\GG$. The intersection of any subset of these divisors is non-empty if and only if the corresponding subset of $\GG$ is $\GG$-nested (see Definition~\ref{def_nested_set_simple}). If this intersection is non-empty, then it is transversal. (See also~\cite[Theorem~1.2]{li}.)
\end{enumerate}

\end{rmk}

\subsection{Models for hyperplane arrangements}
Let $V=\CC^n$ as a vector space. An arrangement of subspaces $\arr$ is a finite family of linear subspaces in $V$. We point out that $\arr$ is central, i.e.\ $0$ belongs to every subspace of $\arr$, and we can assume that $\arr$ is essential, i.e.\ $\bigcap_{H\in\arr}H=\{0\}$.

The poset of intersections of $\arr$ is the set of all the possible intersections of the elements of $\arr$, including the whole space $V$ obtained as the empty intersection, partially ordered by reverse inclusion. It is denoted by $L(\arr)$ and it is a lattice (because $\arr$ is central).

Since $\arr$ is central, its projectivization $\bar{\arr}$ is well-defined as an arrangement of projective subspaces in $\PP(V)=\PP^{n-1}$. Given a building set $\GG$ for $L(\arr)$, a projective wonderful model associated with $\arr$ is the closure of the image of the locally closed embedding
\[
\cpl(\bar{\arr})\longrightarrow \PP(V)\times\prod_{G\in\GG}\PP(V/G)
\]
(see~\cite{wonderful1}) and we denote this model as $\YY^H(\arr,\GG)$. This construction is equivalent to the one in Definition~\ref{def:wonderfulmodel} by noticing that $L(\bar{\arr})$ forms an arrangement of subvarieties in $\PP^{n-1}$ according to Definition~\ref{def:arrsubvar}.



\subsection{Models for toric arrangements}\label{subsec:models_toric}
Let $\TT=(\CC^*)^n$ be an algebraic complex torus and let $X^*(\TT)$ be its group of characters. For $\chi\in X^*(\TT)$, let $x_\chi\colon\TT\to\CC^*$ be the corresponding character on $\TT$. A \emph{layer} in $\TT$ is a subvariety of $\TT$ of the form
\begin{equation}\label{eq:layer_def}
\kk(\Gamma,\phi)\coloneqq\{t\in\TT\mid x_\chi(t)=\phi(\chi)\text{ for all } \chi\in\Gamma\}
\end{equation}
where $\Gamma<X^*(\TT)$ is a split direct summand and $\phi\colon\Gamma\to\CC^*$ is a homomorphism. A toric arrangement $\arr$ is a (finite) set of layers $\{\kk_1,\dotsc,\kk_r\}$ in $\TT$.

The poset of layers of $\arr$ is the set of all the connected components of the possible intersections of the elements of $\arr$, including the whole torus $\TT$ obtained as the empty intersection, partially ordered by reverse inclusion. It is denoted by $\play(\arr)$.

Following~\cite{deconcgaiffi1}, in order to construct a projective wonderful model associated with $\arr$, first we embed the torus $\TT$ in a suitable toric variety $X_\Delta$ with associated fan $\Delta$. In particular, a toric variety $X_\Delta$ is good for $\arr$ if every layer of $\play(\arr)$ has an equal sign basis with respect to the fan $\Delta$.

Let $\play(\bar{\arr})$ be the set of the closures of the layers of $\play(\arr)$ in $X_\Delta$; it can be proven that this set forms an arrangement of subvarieties in $X_\Delta$ according to Definition~\ref{def:arrsubvar}; therefore, given a building set $\GG$ for $\play(\bar{\arr})$, we can define a projective wonderful model associated with $\arr$ according to Definition~\ref{def:wonderfulmodel}. We denote this model as $\YY^T(\arr,\GG)$, where the dependence on $X_\Delta$ is left implicit.

\section{Graphic arrangements and their models}\label{sec:graphic_arr}
In this section we present the hyperplane and toric arrangements associated with some graphs and introduce the ingredients to build the projective wonderful models that we are going to study in Section~\ref{sec:isomorphism_models}.

We identify a graph with the pair $(\VV,\EE)$ where $\VV$ is its set of vertices and $\EE\subset \VV\times \VV $ is its set of edges. We assume all graph to be simple, undirected and without loops. Moreover we will consider only graphs with at least two vertices.

A key concept that will be widely used in this work is that of the cone of a graph. For convenience we report here the definition.
\begin{defn}\label{def:conegraph}
Let $\Gamma=(\VV,\EE)$ be a graph with $\VV=[n]\coloneqq\{1,\dotsc,n\}$. The \emph{cone} of $\Gamma$, denoted with $\susp{\Gamma}$, is a graph with vertices $\susp{\VV}=[n]\cup\{0\}$ and edges
\[
\susp{\EE}=\EE\cup\{(0,j)\st j\in [n]\}.
\]
\end{defn}

In the following definitions, let $v=(1,\dotsc,1)$ of suitable length.
\begin{defn}\label{defn_hyp_graph_arr}
Identify $\CC^{n-1}$ with $\CC^n/(\CC v)$. Given a graph $\Gamma=(\VV,\EE)$ with $\VV=[n]$, we define the associated hyperplane arrangement $\arh(\Gamma)$ in $\CC^{n-1}$ as the set of hyperplanes $\{H_{ij}\}_{(i,j)\in \EE}$ where
 $H_{ij}=\{(x_1,\ldots,x_n)\in\CC^n \st x_i=x_j\}/(\CC v)$.
\end{defn}

\begin{defn}\label{defn_tor_graph_arr}
Identify $(\CC^*)^{n-1}$ with $(\CC^*)^n/(\CC^* v)$. Given a graph $\Gamma=(\VV,\EE)$ with $\VV=[n]$, we define the associated toric arrangement $\art(\Gamma)$ in $(\CC^*)^{n-1}$ as the set of layers
$\{\kk_{ij}\}_{(i,j)\in \EE}$ where
 $\kk_{ij}=\{(t_1,\ldots,t_n)\in(\CC^*)^n \st t_it_j^{-1}=1\}/(\CC^* v)$.
\end{defn}
Notice that $\kk_{ij}$ is a layer according to~\eqref{eq:layer_def}---in fact, in this case we have that $\kk_{ij}=\kk(\Gamma_{ij},\phi_{ij})$, where
\begin{itemize}
\item $\Gamma_{ij}$ is the subgroup of $X^*((\CC^*)^{n-1})=\ZZ^{n-1}=\ZZ^n/(\ZZ v)$ generated by $e_{ij}/(\ZZ v)$, where $e_{ij}\in\ZZ^n$ is the vector that has $1$ in the $i$-th coordinate, $-1$ in the $j$-th coordinate, and $0$ elsewhere;
\item $\phi_{ij}$ is the trivial homomorphism that maps every element of $\Gamma_{ij}$ to $1\in\CC^*$.
\end{itemize}

An arrangement that arises from a graph as in Definition~\ref{defn_hyp_graph_arr} or Definition~\ref{defn_tor_graph_arr} is also called \emph{graphic arrangement}.

\begin{exam}
The (hyperplane or toric) arrangement of type $A_{n-1}$ is a graphic arrangement associated with $K_n$, i.e.\ the complete graph on $n$ vertices. Indeed, any graphic arrangement can be viewed as a subarrangement of it.
\end{exam}

For the remainder of the section, let $\Gamma$ be a graph on $n$ vertices. Notice that by construction the poset of intersections $L(\arh(\Gamma))$ and the poset of layers $\play(\art(\Gamma))$ are isomorphic and, in fact, we can provide a further characterization. Consider the poset of set partitions of $[n]$ ordered by refinement and denote it by $\Pi_n$.
\begin{prop}
Given a subset $I\subseteq [n]$, let $\Gamma(I)$ be the subgraph of $\Gamma$ induced by $I$. Let $\Pi_n(\Gamma)\subseteq\Pi_n$ be the subposet such that $\pi=\{\pi_1,\dotsc,\pi_k\}\in\Pi_n(\Gamma)$ iff $\Gamma(\pi_\ell)$ is connected for each $\ell=1,\dotsc,k$. The poset $L(\arh(\Gamma))$ (and consequently $\play(\art(\Gamma))$) is isomorphic to $\Pi_n(\Gamma)$.
\end{prop}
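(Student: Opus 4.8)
The plan is to produce an explicit order isomorphism $\Phi\colon\Pi_n(\Gamma)\to L(\arh(\Gamma))$; since $L(\arh(\Gamma))$ and $\play(\art(\Gamma))$ are isomorphic by construction, this settles both halves of the statement (alternatively the same argument works verbatim with the layers $\kk_{ij}$ in place of the hyperplanes $H_{ij}$). For a partition $\pi=\{\pi_1,\dotsc,\pi_k\}$ of $[n]$ set
\[
W_\pi\coloneqq\bigl\{(x_1,\dotsc,x_n)\in\CC^n\st x_i=x_j\text{ whenever }i,j\text{ lie in a common block of }\pi\bigr\}\big/(\CC v),
\]
and for $S\subseteq\EE$ let $\pi(S)$ be the partition of $[n]$ into the vertex sets of the connected components of the graph $([n],S)$. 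I would take $\Phi(\pi)\coloneqq W_\pi$.

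The first step is the identity $\bigcap_{(i,j)\in S}H_{ij}=W_{\pi(S)}$, valid for every $S\subseteq\EE$ because the equations $x_i=x_j$ with $(i,j)\in S$ generate, by transitivity, exactly the equations $x_i=x_j$ with $i$ and $j$ in the same connected component of $([n],S)$. Every element of $L(\arh(\Gamma))$ is such an intersection for some $S\subseteq\EE$ (with $S=\emptyset$ giving the whole space), and each block of $\pi(S)$ is the vertex set of a connected subgraph of $([n],S)\subseteq\Gamma$, hence $\pi(S)\in\Pi_n(\Gamma)$; this gives $L(\arh(\Gamma))\subseteq\{W_\pi\st\pi\in\Pi_n(\Gamma)\}$. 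For the reverse inclusion, given $\pi\in\Pi_n(\Gamma)$ I would pick a spanning tree $T_\ell$ of each connected graph $\Gamma(\pi_\ell)$ and take $S$ to be the union of their edge sets, so that $\pi(S)=\pi$ and $W_\pi=\bigcap_{(i,j)\in S}H_{ij}\in L(\arh(\Gamma))$. Thus $L(\arh(\Gamma))=\{W_\pi\st\pi\in\Pi_n(\Gamma)\}$, so $\Phi$ is well-defined and surjective.

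For injectivity and monotonicity I would use that $\pi$ is reconstructed from $W_\pi$ as the equivalence relation ``the coordinates indexed by $i$ and $j$ agree on $W_\pi$'': this makes $\Phi$ injective, and it shows that $W_\pi\subseteq W_{\pi'}$ holds precisely when $\pi$ is coarser than $\pi'$, i.e.\ $\pi\geq\pi'$ in the refinement order of $\Pi_n$. Since $L(\arh(\Gamma))$ is ordered by reverse inclusion, this reads $\pi\geq\pi'\iff\Phi(\pi)\geq\Phi(\pi')$, so $\Phi$ is an isomorphism of posets. The toric statement then follows either from the quoted isomorphism $L(\arh(\Gamma))\cong\play(\art(\Gamma))$ or by repeating the argument with $\kk_{ij}=\{t\st t_it_j^{-1}=1\}/(\CC^*v)$, using that any intersection of such layers is a connected subtorus, so that $\play(\art(\Gamma))$ is precisely the poset of these intersections.

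I do not expect a genuine obstacle here: this is in essence the classical identification of the intersection lattice of a graphic hyperplane arrangement with the bond lattice of the underlying graph. The only points that require a little care are the passages to the quotients by $\CC v$ and $\CC^{*}v$ — which do not change which pairs of coordinates are forced to coincide, so $W_\pi$ and $\pi(S)$ behave as described — and, on the toric side, the (elementary) connectedness of the intersections of the $\kk_{ij}$, which is what makes $\play(\art(\Gamma))$ equal to that poset rather than a proper refinement of it.
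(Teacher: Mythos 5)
Your proof is correct and follows essentially the same route as the paper: you identify $L(\arh(\Gamma))$ with the bond lattice $\Pi_n(\Gamma)$ via the explicit bijection $\pi\mapsto W_\pi$ (the paper calls it $H_\pi$ and writes it as an intersection of the $H_{ij}$, but notes it equals your description by equations). The only difference is that you spell out the surjectivity (spanning-tree argument), injectivity, and order-compatibility steps that the paper leaves as an exercise, and you offer the small alternative of rerunning the argument directly with the layers $\kk_{ij}$ rather than quoting the $L(\arh(\Gamma))\cong\play(\art(\Gamma))$ isomorphism.
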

\begin{proof}
Let $\pi=\{\pi_1,\dotsc,\pi_k\}\in\Pi_n(\Gamma)$. We define $H_\pi\in L(\arh(\Gamma))$ as
\begin{equation}\label{eq:hpi}
H_\pi = \bigcap_{\ell=1}^k \left(\bigcap_{e_{ij}\in \EE(\Gamma(\pi_\ell))} H_{ij}\right)
\end{equation}
where $\EE(\Gamma(\pi_\ell))$ is the set of edges of $\Gamma(\pi_\ell)$. Since $\pi\in\Pi_n(\Gamma)$, this definition is equivalent to
\[
H_\pi=\{(x_1,\dotsc,x_n)\in \CC^n\st x_i=x_j\textrm{ if exists }\ell \textrm{ s.t.\ }\{i,j\}\subseteq \pi_\ell\}/(\CC v).
\]

On the other hand, let $H\in L(\arh(\Gamma))$. It is described by a set of equations
\[
\begin{cases}
x_{i_1}=\dotsb=x_{i_s}, \\
\dots \\
x_{j_1}=\dotsb=x_{j_t},
\end{cases}
\]
where the indices in each row belong to disjoint subsets of $[n]$. Notice that by definition two indices can belong to the same row of equations only if there is a path in $\Gamma$ connecting the corresponding vertices. Define the partition
\[
\pi_H=\{\{i_1,\dotsc,i_s\},\dotsc,\{j_1,\dotsc,j_t\}\}
\]
(eventually completed with singletons corresponding to the variables not appearing in the equations) which, by the previous remark, belongs to $\Pi_n(\Gamma)$.

Now it is an exercise to prove that the two maps
\[
\begin{array}{ccc}
L(\arh(\Gamma)) & \longrightarrow & \Pi_n(\Gamma) \\
H & \longmapsto & \pi_H
\end{array}
\qquad\textrm{and}\qquad
\begin{array}{ccc}
\Pi_n(\Gamma) & \longrightarrow & L(\arh(\Gamma))\\
\pi & \longmapsto & H_\pi
\end{array}
\]
are inverse of each other.
\end{proof}
\begin{rmk}\label{rmk:codimension}
We can compute the codimension of any $H\in L(\arh(\Gamma))$ from $\pi_H$:
\[
\codim(H)=\sum_{B\in\pi_H}\left(\card{(B)}-1\right).
\]
\end{rmk}
We now study some particular projective wonderful models associated with graphic arrangements, starting with the choice of the building set.
\begin{defn}\label{def:blockbuildingset}
Let $\GG$ be the subset of either $\play(\art(\Gamma))$ or $L(\arh(\Gamma))$, depending on the context, whose elements are the subtori/subspaces $G$ such that the corresponding partition $\pi_G$ has exactly one non-singleton block. In this case, if there is no ambiguity, we will use the symbol $\pi_G$ also to denote the only non-singleton block of the partition.
\end{defn}
\begin{prop}\label{prop:G_building}
$\GG$ is a building set.
\end{prop}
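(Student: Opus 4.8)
The plan is to reduce the statement to the combinatorics of $\Pi_n(\Gamma)$, treating the hyperplane and toric cases in parallel. For a subset $B\subseteq[n]$ with $\Gamma(B)$ connected and $\card{(B)}\geq 2$ write $G_B\in\GG$ for the corresponding element, i.e.\ the one whose unique non-singleton block is $B$; in the toric case $G_B$ denotes the closure $\bar{\kk}_B$ in the chosen good toric variety $X_\Delta$.

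\emph{Step 1 (the induced arrangement).} First I would check that $\Lambda(\GG)$ is the whole poset $L(\arh(\Gamma))$ (resp.\ $\play(\bar{\art}(\Gamma))$). If $L$ is an element of that poset and $B_1,\dots,B_m$ are the non-singleton blocks of $\pi_L$, these blocks are pairwise disjoint and each $\Gamma(B_i)$ is connected (since $\pi_L\in\Pi_n(\Gamma)$), so $G_{B_1},\dots,G_{B_m}\in\GG$ and $L=G_{B_1}\cap\cdots\cap G_{B_m}$; in the toric case this last equality, as well as the connectedness of the intersection, uses that $X_\Delta$ is good for $\art(\Gamma)$, so that an intersection of closures of layers is again the closure of a layer. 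Granting this, by Definition~\ref{def:building} it suffices to verify Definition~\ref{def:building_arr} with $\Lambda=L(\arh(\Gamma))$ (resp.\ $\play(\bar{\art}(\Gamma))$).

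\emph{Step 2 (the $\GG$-factors).} Fix $L\in\Lambda\setminus\GG$. Since $L$ is a proper subvariety, $\pi_L$ has at least one non-singleton block, and not exactly one because $L\notin\GG$; let $B_1,\dots,B_m$ with $m\geq 2$ be these blocks. An element $G_C\in\GG$ contains $L$ precisely when the coordinates indexed by $C$ are already forced to coincide on $L$, i.e.\ when $C$ is contained in a single block of $\pi_L$; as $\card{(C)}\geq 2$ this means $C\subseteq B_i$ for some $i$. Since $G_C$ shrinks as $C$ grows and each $B_i$ is itself connected in $\Gamma$, the minimal elements of $\{G\in\GG\st L\subset G\}$ are exactly $G_{B_1},\dots,G_{B_m}$; these are the $\GG$-factors of $L$, and by disjointness of the blocks their intersection is $G_{B_1}\cap\cdots\cap G_{B_m}=L$, as required.

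\emph{Step 3 (transversality).} It remains to show that $G_{B_1},\dots,G_{B_m}$ intersect transversally. Their intersection is clean, being an intersection inside an arrangement of subvarieties, and for a clean family transversality is equivalent to additivity of codimensions; by Remark~\ref{rmk:codimension},
\[
\sum_{i=1}^{m}\codim(G_{B_i})=\sum_{i=1}^{m}\bigl(\card{(B_i)}-1\bigr)=\sum_{B\in\pi_L}\bigl(\card{(B)}-1\bigr)=\codim(L),
\]
which is exactly what we need. In the hyperplane case cleanness is automatic, everything being linear. The step I expect to require the most care is the toric one: one has to make sure that the closures $\bar{\kk}_{B_1},\dots,\bar{\kk}_{B_m}$ still meet cleanly in $X_\Delta$ with the expected tangent spaces, which is exactly where the hypothesis that $X_\Delta$ is good for $\art(\Gamma)$---and in particular the equal-sign-basis condition for these layers---enters; once that is in place, the displayed identity closes the argument in both settings.
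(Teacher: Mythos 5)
Your proof follows essentially the same route as the paper's: identify the minimal elements of $\{G\in\GG\st L\subset G\}$ as the subspaces $G_{B_i}$ attached to the non-singleton blocks of $\pi_L$, and check that their intersection is $L$ and is transversal. You simply make explicit two points the paper leaves as ``clear''/``easy''---the characterization of when $G_C\supseteq L$ and the codimension count via Remark~\ref{rmk:codimension} that yields transversality---together with a sensible caveat about cleanness of the closures in the good toric variety; the argument is correct.
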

\begin{proof}
Let $H\notin\GG$ and let $B_1,\dotsc,B_\ell$, $\ell\geq 2$, be the non-singleton blocks of the associated partition $\pi_H$. We claim that the minimal elements (w.r.t.\ the inclusion of subspaces) of $\GG_H\coloneqq\{G\in\GG\st H\subset G\}$ are $G_1,\dotsc,G_\ell$, where $G_i$ is such that $\pi_{G_i}=B_i$ for $i=1,\dotsc,\ell$. This is clear from the fact that at partition level we are looking for the coarsest partitions with only one non-singleton block that are finer than $\pi_H$.

From this it follows easily that $H=G_1\cap\dotsb\cap G_\ell$ and this intersection is transversal.
\end{proof}
\begin{rmk}
As shown in~\cite{gps}, in the case of $\Gamma=K_n$ this building set corresponds to the minimal one, i.e.\ the building set of irreducibles. However, this is not true in general. For example, suppose that the graph $\Gamma$ contains the edges $(1,2)$ and $(2,3)$ but not $(1,3)$: in this case the element $G\in\GG$ whose block is $\pi_G=\{1,2,3\}$ is not irreducible as a subspace.
\end{rmk}

When we consider the cone $\susp{\Gamma}$, the corresponding building set $\susp{\GG}$, defined as in Definition~\ref{def:blockbuildingset}, admits a useful description in terms of the building set $\GG$ associated with $\Gamma$. 
\begin{prop}\label{prop:building_cone_types}
Let
\[
\begin{array}{rccc}
\iota\colon & \Pi_n(\Gamma) & \longrightarrow & \Pi_{n+1}(\susp{\Gamma}) \\
& \{ \pi_1\dotsc,\pi_k\} & \longmapsto & \{ \{0\},\pi_1,\dotsc,\pi_k\}
\end{array}
\]
where we consider the elements of $\Pi_{n+1}(\susp{\Gamma})$ as partitions of $\{0,1,\dotsc,n\}$. Then each element of $\susp{\GG}\subseteq\Pi_{n+1}(\susp{\Gamma})$ is associated with a partition of one of the following forms:
\begin{enumerate}
\item $\iota(G)$ for some $G\in\GG$;
\item $\{\{0\}\cup S,\{i_1\},\dotsc,\{i_k\}\}$ for some $S\in\mathcal{P}([n])\setminus\{\emptyset\}$ where $[n]\setminus S=\{i_1,\dotsc,i_k\}$.
\end{enumerate}
\end{prop}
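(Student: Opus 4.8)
The plan is to unwind Definition~\ref{def:blockbuildingset} in the case of the cone and then split into two cases according to whether the vertex $0$ belongs to the unique non-singleton block of the partition.

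First I would recall what membership in $\susp{\GG}$ means concretely: an element of $\susp{\GG}$ corresponds to a partition $\pi$ of $\susp{\VV}=\{0,1,\dotsc,n\}$ having a single non-singleton block $B$ (so $\abs{B}\geq 2$), subject only to the constraint, inherited from $\pi\in\Pi_{n+1}(\susp{\Gamma})$, that the induced subgraph $\susp{\Gamma}(B)$ be connected; the singleton blocks impose nothing. Hence the whole statement reduces to describing which subsets $B\subseteq\susp{\VV}$ with $\abs{B}\geq 2$ induce a connected subgraph of $\susp{\Gamma}$, and then reading off the resulting shape of $\pi$.

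Next I would isolate the two elementary observations about the cone that carry the argument: (a) every edge of $\susp{\EE}\setminus\EE$ is incident to $0$, so if $B\subseteq[n]$ then $\susp{\Gamma}(B)=\Gamma(B)$; and (b) the vertex $0$ is adjacent in $\susp{\Gamma}$ to every vertex of $[n]$, so $\susp{\Gamma}(\{0\}\cup S)$ is connected for \emph{every} nonempty $S\subseteq[n]$. Given these, the two cases go as follows. If $0\notin B$, then by~(a) the subgraph $\Gamma(B)$ is connected, so the partition $\{B\}\cup\{\{i\}\st i\in[n]\setminus B\}$ of $[n]$ lies in $\Pi_n(\Gamma)$ and has exactly one non-singleton block; thus it equals $\pi_G$ for a unique $G\in\GG$, and the partition $\pi$ we started from is precisely $\iota(G)$, i.e.\ form~(1). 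If instead $0\in B$, set $S\coloneqq B\setminus\{0\}$; since $\abs{B}\geq 2$ we have $S\neq\emptyset$, and the remaining blocks of $\pi$ are exactly the singletons $\{i\}$ with $i\in[n]\setminus S$, so $\pi$ has form~(2).

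I do not expect a genuine obstacle here: the proof is bookkeeping with the defining connectivity condition of $\Pi_{n+1}(\susp{\Gamma})$ together with observations (a) and (b). The only point needing a little care is~(a), namely that a block avoiding $0$ induces literally the same subgraph in $\susp{\Gamma}$ as in $\Gamma$, so that for such blocks ``induces a connected subgraph of $\susp{\Gamma}$'' is the same condition as ``is the non-singleton block of some $G\in\GG$''. As a byproduct, observations~(a) and~(b) also yield the converse — every $\iota(G)$ with $G\in\GG$ and every partition of form~(2) does belong to $\susp{\GG}$ — so the two families together give an exact description of $\susp{\GG}$, which is convenient for the blowup reordering in Section~\ref{sec:isomorphism_models}.
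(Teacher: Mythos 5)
Your proposal is correct and follows the same route as the paper: the paper's proof is exactly the one-line observation that the partitions of types (1) and (2) are precisely those with a single non-singleton block inducing a connected subgraph of $\susp{\Gamma}$, and your case split on whether $0$ lies in that block, together with observations (a) and (b), just spells this out in detail.
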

\begin{proof}
In fact, the partitions of type 1.\ and 2.\ above are exactly the ones with a single non-singleton block that induces a connected subgraph of $\susp{\Gamma}$.
\end{proof}
We now have all the ingredients to introduce projective wonderful models associated with a graphic hyperplane/toric arrangement.

Let us begin with the toric case; as we have seen in Section~\ref{sec:models}, the starting point is a good toric variety. As noticed in~\cite{deconcgaiffi1}, for any graphic toric arrangement whose graph has $n$ vertices there is a canonical choice of such a variety: the one associated with the fan $\Delta$ induced by the Weyl chambers of the root system of type $A_{n-1}$. In the end, following the construction recalled in Section~\ref{sec:models}, we obtain the projective wonderful model $\Yt(\Gamma,\GG)\coloneqq\Yt(\art(\Gamma),\GG)$.

In the hyperplane setting we do not consider the arrangement $\arh(\Gamma)$ and instead we study $\arh(\susp\Gamma)$, the arrangement associated with the cone $\susp\Gamma$, and the building set $\susp\GG$ defined above, obtaining the model $\Yh(\susp\Gamma,\susp\GG)\coloneqq\Yh(\arh(\susp\Gamma),\susp\GG)$. In the next section we investigate the relationship between $\Yt(\Gamma,\GG)$ and $\Yh(\susp\Gamma,\susp\GG)$.

\section{The isomorphism between the models}\label{sec:isomorphism_models}
Let $\Gamma$ be a graph on $n$ vertices and let $\GG$ be the building set defined in Definition~\ref{def:blockbuildingset}. As spoiled by the title of this section, we want to prove the following result:
\begin{thm}\label{thm:main}
The two models $\Yt(\Gamma,\GG)$ and $\Yh(\susp\Gamma,\susp\GG)$ are isomorphic.
\end{thm}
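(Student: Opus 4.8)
The plan is to realise both wonderful models as wonderful models of one and the same \emph{projective} hyperplane arrangement, and then to recognise the two constructions as differing only in the order in which the elements of the building set are blown up. Concretely, I would first identify the common ambient arrangement: the hyperplane arrangement $\arh(\susp\Gamma)$ lives in $\CC^{n}\cong\CC^{n+1}/(\CC v)$, and its projectivisation is an arrangement of hyperplanes in $\PP^{n-1}$. On the other hand, the good toric variety $X_\Delta$ used for $\art(\Gamma)$ is the toric variety of the fan of Weyl chambers of type $A_{n-1}$, which is exactly $\bar{M}_{0,n+1}$-adjacent — more to the point, it is a smooth projective toric variety containing $\TT=(\CC^*)^{n-1}$ as its open orbit, and its boundary divisors together with the closures of the $\kk_{ij}$ form, after one checks it, an arrangement of subvarieties isomorphic (as a stratified space, with isomorphic posets and normal data) to a hyperplane arrangement in a projective space. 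I would make this precise by exhibiting an explicit isomorphism $X_\Delta\cong\PP^{n-1}$ — or rather, by exhibiting a projective space $\PP^{N}$ and a hyperplane arrangement $\bar\arr$ in it such that (i) $\cpl(\bar\arr)\cong\MM(\art(\Gamma))$ and (ii) the building set $\GG$ (closures of layers plus the torus boundary) corresponds, under the poset isomorphism, to a building set for $\bar\arr$. The analogous, and easier, statement is that $\cpl$ of the projectivised $\arh(\susp\Gamma)$ equals $\MM(\arh(\susp\Gamma))$, and that $\susp\GG$ is a building set for it — this is essentially Proposition~\ref{prop:G_building} applied to $\susp\Gamma$.

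The second step is the combinatorial heart: to check that the two building sets $\GG$ (for the toric side, including the toric boundary strata) and $\susp\GG$ (for the cone on the hyperplane side) become \emph{the same} building set inside the common projective arrangement. This is where Proposition~\ref{prop:building_cone_types} does the work: it says that $\susp\GG$ consists of the partitions $\iota(G)$ for $G\in\GG$ together with the partitions whose single non-singleton block contains $0$. The claim to verify is that the first family corresponds to the closures (in $X_\Delta$) of the layers $G\in\GG$ of the toric arrangement $\art(\Gamma)$, while the second family — the blocks containing $0$ — corresponds precisely to the torus-invariant boundary strata of $X_\Delta$ that must be blown up in the De Concini–Gaiffi construction; the extra vertex $0$ of the cone plays the role of "the direction at infinity". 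Here I would lean on the explicit description of the fan $\Delta$ (Weyl chambers of $A_{n-1}$), whose rays and cones are indexed exactly by proper subsets $S\subsetneq[n]$ (equivalently, ordered set partitions into two blocks, up to the relevant identification), matching the partitions $\{\{0\}\cup S,\{i_1\},\dots,\{i_k\}\}$ of type 2.

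The third step is to invoke Theorem~\ref{thm:iterblowup}: both $\Yt(\Gamma,\GG)$ and $\Yh(\susp\Gamma,\susp\GG)$ are obtained as iterated blowups of the common ambient variety along the dominant transforms of the elements of this common building set, in \emph{some} order refining inclusion. Since the resulting wonderful model $\YY(X,\GG)$ of Definition~\ref{def:wonderfulmodel} is defined as a closure of a fixed embedding and hence does not depend on the chosen admissible order (this is exactly the content of the remark after Theorem~\ref{thm:iterblowup}: any total order refining inclusion is admissible and yields the same model), the two models coincide. So the proof reduces to: (a) identify the ambient varieties and the complements, (b) identify the building sets, (c) quote order-independence. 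I expect step (a) — producing the honest isomorphism between the good toric variety $X_\Delta$ (with its stratification by layer-closures and boundary) and a projective space with a hyperplane arrangement — to be the main obstacle, since one must check not just that the posets agree but that the \emph{geometry} (smoothness of the strata, cleanliness and transversality of intersections, and the identification of normal bundles) matches, so that "arrangement of subvarieties" in the sense of Definition~\ref{def:arrsubvar} is respected on both sides; the combinatorial bookkeeping in step (b), while routine, must be done carefully because the presence of the toric boundary on one side and the cone vertex $0$ on the other is exactly the subtle point of the correspondence.
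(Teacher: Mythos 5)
Your proposal follows the paper's proof essentially step for step: realise both complements as the complement of the same projective hyperplane arrangement in $\PP^{n-1}$, match $\susp{\GG}$ with the union of the toric boundary strata (blocks containing $0$, giving $X_\Delta$ as a blowup of $\PP^{n-1}$) and the layer closures (the blocks $\iota(G)$), and conclude via Theorem~\ref{thm:iterblowup}. The only divergence is in justifying the admissibility of the ordering ``all $M_\pi$ first, then all $H_\pi$'': you appeal to the remark that any order refining inclusion works (which does hold here, since no $H_\pi$ is ever contained in an $M_{\pi'}$, though you should say so explicitly), whereas the paper verifies directly that every initial segment of this ordering is a building set.
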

\begin{proof}
The first step of the proof consists in noticing that the complement $\MM(\art(\Gamma))$ can be realized as the complement of a subspace arrangement in a suitable projective space.

As in the beginning of Section~\ref{sec:graphic_arr}, let $v=(1,\dotsc,1)$ and let $\EE\subseteq [n]\times[n]$ be the set of edges of $\Gamma$. We identify $(\CC^*)^{n-1}=(\CC^*)^n/(\CC^* v)$ with
\[
\PP(\CC^n)\setminus \bigcup_{i=1}^n\{ t_i=0\}
\]
where $t_1,\dotsc,t_n$ are the projective coordinates on $\PP(\CC^n)$. The map is given by
\[
\begin{array}{rcl}
(\CC^*)^n/(\CC^* v) & \longrightarrow & {\displaystyle \PP(\CC^n)\setminus \bigcup_{i=1}^n\{ t_i=0\} }\\
\lbrack(t_1,\dotsc,t_n)\rbrack & \longmapsto & [t_1,\dotsc,t_n]
\end{array}
\]
where on the left the notation with square brackets denotes the class of $(t_1,\dotsc,t_n)$ in the quotient. Under this identification, the complement $\MM(\art(\Gamma))$ is the same as the complement of an arrangement $\arr'$ in $\PP^{n-1}=\PP(\CC^n)$ whose projective hyperplanes are
\begin{enumerate}[label=(\roman*)]
\item $\{t_i=0\}$, for $i=1,\dotsc,n$;
\item $\{t_i=t_j\}$, for $(i,j)\in \EE$.
\end{enumerate}

For convenience, we recall again the construction of the model $\Yt(\Gamma,\GG)$ provided by~\cite{deconcgaiffi1}:
\begin{enumerate}
\item first, we embed the torus $(\CC^*)^n/(\CC^* v)$ in the toric variety $X_\Delta$ associated with the Coxeter fan $\Delta$ of type $A_{n-1}$;
\item then, we blow up all the closures of the layers of $\GG$ in an order that refines inclusion.
\end{enumerate}

As a consequence of the identification of $\art(\Gamma)$ with a projective arrangement in $\PP^{n-1}$, the construction just outlined is equivalent to the following:
\begin{enumerate}
\item first, we obtain from $\PP^{n-1}$ the toric variety $X_\Delta$ by blowing up the intersections of the subspaces of $\arr'$ of type (i) in any order that refines inclusion (see for example~\cites[Section~2]{batyrevblume}[Section~3]{procesi90});
\item then we blow up the (proper transforms of the) elements of $\GG$, obtained as intersections of the subspaces of type (ii), in any order that refines inclusion.
\end{enumerate}

Now we focus on the construction of the model $\Yh(\susp{\Gamma},\susp{\GG})$ for the hyperplane arrangement $\arh(\susp{\Gamma})$. At first we build its projectivization in an opportune way. Recall that $\arh(\susp{\Gamma})$ is an arrangement in an $n$-dimensional space: if we choose to view it in the space
\[
N\coloneqq \{ (0,x_1,\dotsc,x_n)\st x_i\in\CC\}\subseteq \CC^{n+1},
\]
then its projectivization in $\PP(N)$, with coordinates $[x_1,\dotsc,x_n]$ omitting the leading zero, has the following projective hyperplanes:
\begin{itemize}
\item $\{x_i=0\}$, for $i=1,\dotsc,n$;
\item $\{x_i=x_j\}$, for $(i,j)\in \EE$.
\end{itemize}
In other words, we obtain the same projective arrangement $\arr'$ as the toric case.

Let us characterise the elements of $\susp{\GG}$ in this setting. Let $\pi$ be a partition in $\Pi_{n+1}(\susp{\Gamma})$ with only one non-singleton block. If 0 does \emph{not} belong to this block, we will denote the corresponding subspace in $\susp{\GG}$ by $H_\pi$; otherwise we will denote the subspace by $M_\pi$. Notice that these two cases correspond to the subspaces of type 1. and type 2. respectively in Proposition~\ref{prop:building_cone_types}.

With this notation, we have that the elements of type $M_\pi$ correspond to the intersections of the subspaces of type (i) in the toric setting, and the elements of type $H_\pi$ correspond to the elements of $\GG$. With this in mind,  we can conclude that the toric and hyperplane models are isomorphic if we can build the model $\Yh(\susp{\Gamma},\susp{\GG})$ by blowing up in $\PP(N)\simeq \PP^{n-1}$ the subspaces of $\susp{\GG}$ in the same order as the one described in the construction of the toric model $\Yt(\Gamma,\GG)$, i.e.\ if we can reorder the elements of $\susp{\GG}=\{G_1,\dotsc,G_m\}$ in the following way:
\begin{enumerate}
\item first, we put all the subspaces of type $M_\pi$ in any order that refines inclusion (say that these are $G_1,\dotsc,G_{t_0}$);
\item then, we put all the subspaces of type $H_\pi$ in any order that refines inclusion (say that these are $G_{t_0+1},\dotsc,G_m$); notice that by construction the segment $\susp{\GG}_{t_0+1}^t\coloneqq\{G_{t_0+1},\dotsc,G_t\}$ is building for every $t=t_0+1,\dotsc,m$.
\end{enumerate}
According to~\cite[Theorem~1.3]{li}, this is an admissible ordering of the elements of $\susp{\GG}$ if the initial segment $\susp{\GG}_t=\{G_1,\dotsc,G_t\}$ is building for every $t=1,\dotsc,m$. Let us prove this, distinguishing two cases.

\noindent\framebox{Case 1.} If all the subspaces in $\susp{\GG}_t$ are of type $M_\pi$, then any intersection of elements of $\susp{\GG}_t$ is still of type $M_\pi$ and belongs to $\susp{\GG}_t$ since the ordering refines the inclusion. This implies that $\susp{\GG}_t$ is building.

\noindent\framebox{Case 2.} If at least one subspace in $\susp{\GG}_t$ is of type $H_\pi$ (notice that in this case all the subspaces of type $M_\pi$ are in $\susp{\GG}_t$), then let $L$ be the intersection of some elements of $\susp{\GG}_t$; after having removed the non-minimal subspaces, it has the form
\begin{equation}\label{eq:building_reorder_intersect}
L=L_1\cap\dotsb\cap L_k
\end{equation}
where $k\geq 1$, $L_1$ is either of type $M_{\pi_1}$ or of type $H_{\pi_1}$, all the other $L_i$'s are of type $H_{\pi_i}$ and the non-singleton blocks of $\pi_1,\dotsc,\pi_k$ are pairwise disjoint. We notice that all the $L_i$'s belong to $\susp{\GG}_t$:
\begin{itemize}
\item we already know that it is true for the subspaces of type $M_{\pi}$;
\item for the subspaces of type $H_{\pi}$, this is true because each $L_i$ of type $H_{\pi_i}$ appears as a $\susp{\GG}_{t_0+1}^t$-factor of the intersection of some elements of $\susp{\GG}_{t_0+1}^t$, and $\susp{\GG}_{t_0+1}^t$ is building.
\end{itemize}
Moreover the partition corresponding to $L$ is given by the disjoint union of the non-singleton blocks of $\pi_1,\dotsc,\pi_k$ completed with the remaining singletons, so~\eqref{eq:building_reorder_intersect} is a transversal intersection. This ends the proof that $\susp{\GG}_t$ is building.

In conclusion, we have proven that the constructions in the toric case and in the hyperplane case are the same, therefore Theorem~\ref{thm:iterblowup} can be applied and this shows that, up to isomorphism, $\Yt(\Gamma,\GG)$ and $\Yh(\susp{\Gamma},\susp{\GG})$ are the same.
\end{proof}

\section{Applications}\label{sec:examples}
In this section we present two examples of noteworthy graphic arrangements and the models associated with them. Both the examples lead to a geometric proof of an equality involving Eulerian polynomials.

\subsection{A known example: The complete graph}
Let $K_n$ be the complete graph on $n$ vertices. As already mentioned, both in the hyperplane case and in the toric one the graphic arrangement associated with it is the arrangement of Coxeter type $A_{n-1}$, which is well-known (see for example~\cite{yuzvinsky,gaiffiselecta} for the hyperplane case and~\cite{procesi90,stembridge92,stanley,dolgachevlunts} for the toric one).

In this case it is trivial to notice that $\susp{K}_n=K_{n+1}$, so a straightforward application of Theorem~\ref{thm:main} gives the following corollary, which has already been announced (without a proper proof) in~\cite[Section~5]{gps}.
\begin{crl}
The model $\Yt(K_n,\GG)$ associated with the toric arrangement of type $A_{n-1}$ is isomorphic to the model $\Yh(K_{n+1},\susp{\GG})$ associated with the hyperplane arrangement of type $A_n$.
\end{crl}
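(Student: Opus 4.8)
The plan is to reduce the statement directly to Theorem~\ref{thm:main} via the elementary observation that the cone of a complete graph is again a complete graph. First I would check the graph-theoretic identity $\susp{K_n}=K_{n+1}$: by Definition~\ref{def:conegraph}, $\susp{K_n}$ has vertex set $[n]\cup\{0\}$ and edge set $\EE(K_n)\cup\{(0,j)\st j\in[n]\}$; since $\EE(K_n)$ already consists of all unordered pairs from $[n]$, adjoining every edge from the new vertex $0$ produces precisely the set of all unordered pairs from $\{0,1,\dotsc,n\}$, i.e.\ $\EE(K_{n+1})$. Consequently the building set $\susp{\GG}$ attached to $\susp{K_n}$ as in Definition~\ref{def:blockbuildingset} is literally the building set attached to $K_{n+1}$, so $\Yh(\susp{K_n},\susp{\GG})=\Yh(K_{n+1},\susp{\GG})$ as models.

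Next I would invoke the Example of Section~\ref{sec:graphic_arr} identifying, for any $m$, the graphic arrangement of $K_m$ with the Coxeter arrangement of type $A_{m-1}$, in both the hyperplane and the toric settings. Thus $\art(K_n)$ is the toric arrangement of type $A_{n-1}$ and $\arh(\susp{K_n})=\arh(K_{n+1})$ is the hyperplane arrangement of type $A_n$, and therefore $\Yt(K_n,\GG)=\Yt(\art(K_n),\GG)$ is a projective wonderful model of the toric $A_{n-1}$ arrangement while $\Yh(K_{n+1},\susp{\GG})$ is a projective wonderful model of the hyperplane $A_n$ arrangement.

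Finally I would apply Theorem~\ref{thm:main} with $\Gamma=K_n$, which yields $\Yt(K_n,\GG)\simeq\Yh(\susp{K_n},\susp{\GG})$; combined with the two identifications above this is exactly the asserted isomorphism $\Yt(K_n,\GG)\simeq\Yh(K_{n+1},\susp{\GG})$. There is no real obstacle in this argument: everything of substance is already contained in Theorem~\ref{thm:main}, and the only point requiring verification is the trivial bookkeeping that under $\susp{K_n}=K_{n+1}$ the building set $\susp{\GG}$ of the cone coincides with the graphic building set of $K_{n+1}$, which is immediate from Definition~\ref{def:blockbuildingset}.
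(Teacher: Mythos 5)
Your proposal is correct and matches the paper's argument exactly: the corollary is obtained by observing that $\susp{K}_n=K_{n+1}$ (so $\susp{\GG}$ is the graphic building set of $K_{n+1}$) and then applying Theorem~\ref{thm:main} with $\Gamma=K_n$. No further comment is needed.
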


\subsection{Disjoint union of pairs of complete graphs}
Let $K_{n,m}$ be the graph obtained as the disjoint union of $K_n$ and $K_m$, namely the graph on $n+m$ vertices $\{1,\ldots,n+m\}$ such that $(i,j)$ is an edge if and only if either $i,j\leq n$ or $i,j>n$ (see Figure~\ref{fig:disjoint_graphs} for an example).
\begin{figure}[htb]
\centering
\begin{tikzpicture}[scale=0.8]
\coordinate (1) at (0,0);
\coordinate (2) at (2,0.4);
\coordinate (3) at (1.1,1.7);
\coordinate (4) at (3.1,0);
\coordinate (5) at (4.6,-0.2);
\coordinate (6) at (4.8,1.4);
\coordinate (7) at (3.4,1);


\draw (1) -- (2) -- (3) -- cycle;
\fill (1) circle [radius=0.07] node[below left] {$1$};
\fill (2) circle [radius=0.07] node[below right] {$2$};
\fill (3) circle [radius=0.07] node[above] {$3$};

\draw (4) -- (5) -- (7) -- (6) -- (4) -- (7); \draw (5) -- (6);
\fill (4) circle [radius=0.07] node[below left] {$4$};
\fill (5) circle [radius=0.07] node[below right] {$5$};
\fill (6) circle [radius=0.07] node[above right] {$6$};
\fill (7) circle [radius=0.07] node[above left] {$7$};

\end{tikzpicture}\hspace{2cm}\begin{tikzpicture}[scale=0.8]
\coordinate (1) at (0,0);
\coordinate (2) at (2,0.4);
\coordinate (3) at (1.1,1.7);
\coordinate (4) at (3.1,0);
\coordinate (5) at (4.6,-0.2);
\coordinate (6) at (4.8,1.4);
\coordinate (7) at (3.4,1);

\coordinate (0) at (2.6,3);

\draw (1) -- (2) -- (3) -- cycle;
\fill (1) circle [radius=0.07] node[below left] {$1$};
\fill (2) circle [radius=0.07] node[below right] {$2$};
\fill (3) circle [radius=0.07] node[above left] {$3$};

\draw (4) -- (5) -- (7) -- (6) -- (4) -- (7); \draw (5) -- (6);
\fill (4) circle [radius=0.07] node[below left] {$4$};
\fill (5) circle [radius=0.07] node[below right] {$5$};
\fill (6) circle [radius=0.07] node[above right] {$6$};
\fill (7) circle [radius=0.07] node[left] {$7$};

\foreach \x in {1,...,7}{\draw (\x) -- (0);}
\fill (0) circle [radius=0.07] node[above] {$0$};
\end{tikzpicture}
\caption{The graph $K_{3,4}$ and its cone $\susp{K}_{3,4}$.}
\label{fig:disjoint_graphs}
\end{figure}

By Theorem~\ref{thm:main}, the toric model associated with $K_{n,m}$ is isomorphic to the projective hyperplane model associated with $\susp{K}_{n,m}$, and in particular
\[
H^*(\Yt(K_{n,m},\GG),\ZZ)\cong H^*(\Yh(\susp{K}_{n,m},\susp{\GG}),\ZZ).
\]
We will count the elements of two monomial bases of the two rings, obtained using the description of~\cite{gaiffiselecta,yuzvinsky} in the hyperplane case and~\cite{gps} in the toric case. To do so, we briefly recall the main tools needed for these constructions.
\begin{defn}\label{def_nested_set_simple}
Let $\Lambda$ be a (simple) arrangement of subvarieties and let $\GG$ be a building set for $\Lambda$. A subset $\NS\subseteq\GG$ is called ($\GG$-)\emph{nested} if for any set of pairwise non-comparable (w.r.t.\ the inclusion) elements $\{A_1,\dotsc,A_k\}\subseteq\NS$, with $k\geq 2$, there is an element in $\Lambda$ of which $A_1,\dotsc,A_k$ are the $\GG$-factors.
\end{defn}
In the case of graphic arrangements, considering the building set $\GG$ of Definition~\ref{def:blockbuildingset}, the nested sets are $\{H_{\pi_1},\dotsc,H_{\pi_j}\}$ such that the non-singleton blocks of $\{\pi_1,\dotsc,\pi_j\}$ are pairwise either comparable or disjoint.
\begin{defn}\label{def_admissible}
A function $f\colon\GG \to \NN$ is ($\GG$)-\emph{admissible} if it has both the following properties:
\begin{enumerate}
\item $\supp f$ is $\GG$-nested; 
\item for every $A\in \supp f$ we have $f(A)<\dim M_f(A)-\dim A$, where $M_f(A)$ is the (connected) intersection of the elements of $\supp f$ that properly contain $A$.
\end{enumerate}
Notice that the function such that $f(A)=0$ for every $A\in\GG$ is admissible.
\end{defn}
For each element $G\in\GG$, let $T_G$ be a polynomial variable. The \emph{admissible monomial} associated with an admissible function $f$ is
\[
m_f=\prod_{G\in \GG}T_G^{f(G)}.
\]
\begin{thm}[{\cite[Theorem~2.1]{gaiffiselecta}; see also~\cite{yuzvinsky}}]
The set
\[
\{m_f\st f\  \GG\textrm{-admissible}\}
\]
is a monomial basis for $H^*(\Yh(\arr,\GG),\ZZ)$.
\end{thm}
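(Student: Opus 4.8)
Since this is \cite[Theorem~2.1]{gaiffiselecta}, I only indicate the strategy. The natural plan is to combine the realization of $\Yh(\arr,\GG)$ as an iterated blowup---Theorem~\ref{thm:iterblowup}, starting from $X_0=\PP(V)$ with a linear order $G_1,\dots,G_m$ of $\GG$ refining inclusion---with the classical blowup formula for integral cohomology, and to argue by induction on $m=\card{\GG}$. Recall that if $Z$ is a smooth center of codimension $d$ in a smooth variety $W$, there is a canonical splitting of graded $\ZZ$-modules
\[
H^*(\Bl{Z}{W},\ZZ)\;\cong\;H^*(W,\ZZ)\ \oplus\ \bigoplus_{j=1}^{d-1}H^{*-2j}(Z,\ZZ)\cdot\xi^{j},
\]
where $\xi$ is the class of the exceptional divisor and the $j$-th summand is realized by Gysin pushforward from it. Applying this at the $k$-th step with $W=X_{k-1}$, $Z$ the dominant transform $\dt{G_k}$ and $\xi\leftrightarrow T_{G_k}$, the cohomology of $X_k$ is built from that of $X_{k-1}$ by adjoining the classes $\theta\cdot T_{G_k}^{\,j}$, where $\theta$ ranges over a $\ZZ$-basis of $H^*(\dt{G_k})$ and $1\le j\le \codim_{X_{k-1}}(\dt{G_k})-1$.

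The geometric fact that turns this into a clean recursion is that each blowup center $\dt{G_k}$ is itself a product of wonderful models of strictly smaller dimension, built from the building sets that $\GG_{k-1}$ induces on the restriction to $G_k$ and on the complementary directions, and that---by the nestedness statement in the remark following Theorem~\ref{thm:iterblowup}---the only previously created divisors $D_{G_i}$, $i<k$, meeting $\dt{G_k}$ are those with $G_i$ comparable to, or disjoint from, $G_k$. Granting the inductive hypothesis, a $\ZZ$-basis of $H^*(\dt{G_k})$ is given by the admissible monomials of these smaller building sets; transported to $X_{k-1}$, these correspond exactly to admissible functions $g$ supported on $\GG_{k-1}$ whose support is $\GG_k$-nested together with $G_k$, with the bound on $g(A)$ read off inside the appropriate factor. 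Since $\codim_{X_{k-1}}(\dt{G_k})=\dim M_f(G_k)-\dim G_k$, multiplying such a $g$ by $T_{G_k}^{\,j}$ with $1\le j<\dim M_f(G_k)-\dim G_k$ produces precisely the $\GG_k$-admissible functions $f$ with $G_k\in\supp f$, while the admissible monomials already present in $H^*(X_{k-1})$ account for those with $f(G_k)=0$. This closes the induction, the case $k=m$ being the assertion; passing from rational to integral coefficients is free because the blowup splitting above is integral.

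I expect the main obstacle to be the precise description of the blowup centers $\dt{G_k}$ and, in tandem, the verification that the exponent range controlling the powers of $T_{G_k}$ is exactly $\dim M_f(G_k)-\dim G_k$: this rests on careful bookkeeping of how the successive blowups of Theorem~\ref{thm:iterblowup} interact---in particular on the transversality and nestedness properties of the divisors $D_G$ recalled after that theorem---and on identifying $M_f(G_k)$, the intersection of the support elements properly containing $G_k$, with the flat governing the codimension of $\dt{G_k}$ in $X_{k-1}$. Once this dictionary is set up, the equivalence with Definition~\ref{def_admissible} is combinatorial. An alternative, more algebraic route avoids the iterated geometry altogether: start from the De~Concini--Procesi presentation of $H^*(\Yh(\arr,\GG),\ZZ)$ by the generators $T_G$ together with the relations attached to non-nested subsets of $\GG$ and to chains $G\subset A$, exhibit a Gröbner basis of the relation ideal whose leading monomials have unit coefficients, check that the associated standard monomials are exactly the $m_f$ with $f$ admissible, and deduce linear independence (hence the basis property over $\ZZ$) either directly from the Gröbner structure or by comparing $\card{\{f\st f\ \GG\text{-admissible}\}}$ with the Betti numbers of $\Yh(\arr,\GG)$, which are known not to depend on the chosen building set ordering.
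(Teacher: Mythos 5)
This statement is not proved in the paper you are reading: it is quoted verbatim from the cited sources (Gaiffi's Selecta paper; see also Yuzvinsky), so there is no in-paper argument to compare against. That said, your sketch is a faithful outline of the two known proofs. The first strategy you describe --- iterating Theorem~\ref{thm:iterblowup}, applying the integral blowup decomposition $H^*(\Bl{Z}{W})\cong H^*(W)\oplus\bigoplus_{j}H^{*-2j}(Z)\,\xi^j$ at each step, and exploiting the recursive description of the centers $\dt{G_k}$ as (products of) smaller wonderful models --- is essentially Gaiffi's geometric argument. The alternative you offer --- exhibiting the admissible monomials as the standard monomials of a suitable Gröbner-type basis for the De~Concini--Procesi presentation of the cohomology ring --- is essentially Yuzvinsky's argument. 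So you have in fact summarized both cited proofs.

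One caution on the bookkeeping you flag as the ``main obstacle''. The identity $\codim_{X_{k-1}}(\dt{G_k})=\dim M_f(G_k)-\dim G_k$ is \emph{not} an identity of the form ``geometric codimension equals the admissibility bound'' in general: the left-hand side equals the constant $\codim_{\PP(V)}G_k$ (blowing up earlier centers, which by the inclusion-refining order are contained in $G_k$ or incomparable to it, does not change the codimension of the dominant transform), whereas the right-hand side depends on $f$ through $\supp f$. What saves the recursion is precisely that, for any $\GG$-admissible $f$ with $\supp f\subseteq\GG_k$, no element of $\GG_k$ properly contains $G_k$, so $M_f(G_k)$ is the ambient space and the two quantities agree \emph{at step $k$}. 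The genuinely delicate part, which the blowup splitting alone does not hand you, is showing that the classes produced this way remain independent and exhaust the cohomology after the later blowups along $G_i\supsetneq G_k$: this is where the restriction maps to $\dt{G_k}$, the nestedness/transversality structure of the $D_G$, and a count against the Poincaré polynomial all enter, and it is the content of the cited proofs rather than a formal consequence of the blowup formula.
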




In the toric case, let $X_\Delta$ be a good toric variety for $\arr$ and let $\mathcal{R}$ be the set of its rays; moreover, for each ray $r$, let $C_r$ be a polynomial variable.
\begin{thm}[{see~\cite[Theorem~4.7]{gps}}]
The set
\[
\{b\thinspace m_f\st f\  \GG\textrm{-admissible},\ b\in \mathcal{B}_f\}
\]
is a monomial basis for $H^*(\Yt(\arr,\GG),\ZZ)$, where $\mathcal{B}_f\subseteq\ZZ[C_r\st r\in\mathcal{R}]$ is a monomial basis of $H^*(X_{\Delta(f)},\ZZ)$ (here $\Delta(f)$ is a suitable subfan of $\Delta$ defined by $\supp f$).
\end{thm}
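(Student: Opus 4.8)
The plan is to prove the statement by induction on $m\coloneqq\card{\GG}$, using the iterated blowup description of the model provided by Theorem~\ref{thm:iterblowup} together with the classical blowup formula in cohomology. I would fix an ordering $\GG=\{G_1,\dotsc,G_m\}$ refining inclusion with $G_m$ a maximal element, set $X_0\coloneqq X_\Delta$ and $X_k\coloneqq\Bl{\dt{G_k}}{X_{k-1}}$, so that each $\GG_k=\{G_1,\dotsc,G_k\}$ is building and $X_m=\Yt(\arr,\GG)$. The base case $m=0$ is immediate: $\Yt(\arr,\emptyset)=X_\Delta$, the only admissible function is $f\equiv 0$, for which $m_f=1$ and $\Delta(f)=\Delta$, so the claimed set is $\mathcal{B}_0$, a monomial basis of $H^*(X_\Delta,\ZZ)$ in the ray classes $C_r$ by standard toric geometry.

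For the inductive step I would first carry out the geometric bookkeeping. Since $G_m$ is maximal, $\GG_{m-1}$ is building, so $H^*(X_{m-1},\ZZ)$ already has the stated basis by the inductive hypothesis. The map $X_{m-1}\to X_\Delta$ is birational and an isomorphism over a dense open set met by $G_m$, so the dominant transform $\dt{G_m}$ in $X_{m-1}$ is smooth of codimension $c\coloneqq\codim_{X_\Delta}G_m=\dim X_\Delta-\dim G_m$. The blowup formula then gives a direct sum decomposition
\[
H^*(X_m,\ZZ)\;\cong\;H^*(X_{m-1},\ZZ)\;\oplus\;\bigoplus_{j=1}^{c-1}H^{*-2j}(\dt{G_m},\ZZ),
\]
in which the first summand is embedded by $\pi^*$ (where $\pi\colon X_m\to X_{m-1}$) and the $j$-th piece of the second is realised by multiplying lifts of classes from $\dt{G_m}$ by $T_{G_m}^{\,j}$, $T_{G_m}$ being the class of the exceptional divisor $D_{G_m}$. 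Because $G_m$ is maximal, $\dt{G_m}$ is not contained in any previous exceptional or toric divisor, so $\pi^*$ fixes each $T_{G_i}$ ($i<m$) and each $C_r$; hence the first summand contributes exactly the claimed monomials $b\,m_f$ with $f(G_m)=0$. The key structural input is that $\dt{G_m}$ is itself a projective toric wonderful model: the blowups producing $X_{m-1}$ restrict on $G_m$ to the blowups of the (transforms of the) components of $G_i\cap G_m$, $G_i\in\GG_{m-1}$, so $\dt{G_m}\cong\Yt(\arr\rest{G_m},\GG\rest{G_m})$ over the good toric variety $X_{\Delta(G_m)}$ obtained from $X_\Delta$ by restriction to $G_m$; applying the inductive hypothesis to this smaller datum gives $H^*(\dt{G_m},\ZZ)$ a monomial basis $\{b'\,m_{f'}\st f'\ \GG\rest{G_m}\text{-admissible},\ b'\in\mathcal{B}_{f'}\}$.

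It then remains to match combinatorics. A basis element of the $j$-th summand has the form $T_{G_m}^{\,j}\cdot(b'\,m_{f'})$ with $1\le j\le c-1$, and I would check that $(j,f',b')\mapsto(f,b)$ — where $f(G_m)=j$, $f$ agrees with $f'$ via the bijection $A\mapsto A\cap G_m$ between $\GG\rest{G_m}$ and the elements of $\GG_{m-1}$ meeting $G_m$, and $b=b'$ — is a bijection onto the $\GG$-admissible functions with $f(G_m)\ge 1$, compatibly with the toric factors. Concretely: $\supp f$ is $\GG$-nested iff $\{G_m\}\cup\supp f'$ is the image of a $\GG\rest{G_m}$-nested set; since $G_m$ is maximal, $M_f(G_m)=X_\Delta$, so the bound of Definition~\ref{def_admissible} for $A=G_m$ reads $f(G_m)<\dim X_\Delta-\dim G_m=c$, matching $1\le j\le c-1$; for $A\in\supp f\setminus\{G_m\}$ the bound transfers verbatim since $\dim M_f(A)-\dim A=\dim M_{f'}(A\cap G_m)-\dim(A\cap G_m)$; and the subfan $\Delta(f)$ attached to $\supp f$ is exactly the subfan of $\Delta(G_m)$ attached to $\supp f'$, so $\mathcal{B}_f$ may be taken equal to $\mathcal{B}_{f'}$. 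As the decomposition is a direct sum, combining the two inductive bases yields the desired basis of $H^*(X_m,\ZZ)$, closing the induction.

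The hard part will be the geometric input in the second paragraph: proving rigorously that the dominant transform $\dt{G_m}$ is the toric wonderful model $\Yt(\arr\rest{G_m},\GG\rest{G_m})$ over a genuinely good toric variety $X_{\Delta(G_m)}$, which needs a careful analysis of how closures of layers sit in $X_\Delta$ and change under the preceding blowups, and of the fact that restriction of $X_\Delta$ to $G_m$ is again good for the induced arrangement. This is also the step that forces one to pin down the precise meaning of ``the suitable subfan $\Delta(f)$ defined by $\supp f$'' and to show it is compatible with the recursion. By contrast the blowup formula, the base case, and spanning/independence once the summands are identified are routine.
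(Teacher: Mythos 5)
First, a point of comparison: the paper does not prove this statement at all --- it is imported verbatim from \cite[Theorem~4.7]{gps} --- so there is no in-paper argument to measure yours against. Your strategy (induction on $\card{\GG}$ via the iterated-blowup description of Theorem~\ref{thm:iterblowup} together with the cohomological blowup formula) is the standard route, and it is essentially how the result is established in the sources this theorem comes from: the cohomology of the toric models is computed by exactly this kind of blowup induction in the De Concini--Gaiffi papers, and \cite{gps} extracts the monomial basis from the resulting presentation. Your bookkeeping is also sound where it is explicit: the base case, the split between $f(G_m)=0$ and $f(G_m)=j\geq 1$, and the match between the admissibility bound $f(G_m)<\dim X_\Delta-\dim G_m$ and the range $1\leq j\leq c-1$ of the blowup formula are all correct.

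As a proof, however, the proposal is incomplete precisely where the content lies, and you say so yourself. Three concrete gaps. (1) The identification $\dt{G_m}\cong\Yt(\arr\rest{G_m},\GG\rest{G_m})$ over a \emph{good} toric variety is not routine in the toric setting: intersections of closures of layers with $\bar{G}_m$ may be disconnected, the induced collection must be shown to be building, and one must check that the toric structure on $\bar{G}_m$ is again good for the induced arrangement --- this is the heart of the De Concini--Gaiffi machinery and cannot be deferred. (2) To realize the $j$-th summand of the blowup decomposition as the span of the monomials $T_{G_m}^{\,j}\,b'\,m_{f'}$ you need the restriction $H^*(X_{m-1},\ZZ)\to H^*(\dt{G_m},\ZZ)$ to be surjective and to carry the ambient monomials to the corresponding restricted ones; the phrase ``lifts of classes from $\dt{G_m}$'' presupposes this and it is nowhere justified, yet without it the summands are not identified with sets of monomials in the $T_G$ and $C_r$. (3) The compatibility of $\Delta(f)$ with the recursion --- that the subfan attached to $\supp f$ coincides with the one attached to $\supp f'$ inside the fan of $\bar{G}_m$ --- is asserted rather than proved, and it is exactly what gives $\mathcal{B}_f$ its meaning. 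In short: the skeleton is the right one and matches the literature, but what you have written is a plan for a proof rather than a proof.
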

\begin{rmk}
Recall that for graphic toric arrangements we can choose the fan $\Delta=\Delta_{n-1}$ induced by the Weyl chambers of the root system of type $A_{n-1}$; in this case, for any $\GG$-admissible function $f$ we have that $\Delta(f)$ is isomorphic to a fan of the form $\Delta_{k-1}$ for some $k\leq n$, i.e.\ it is again a fan associated with a root system of type $A_{k-1}$. In particular we know that their Poincaré polynomials are given by Eulerian polynomials.
\end{rmk}

Finally, to count the elements of the monomial bases we just need to recall from~\cite{gps} the definitions of \emph{admissible trees} and \emph{admissible forests}.
\begin{defn}\label{def:admissibletree}
An \emph{admissible tree} on $m$ leaves is a labeled directed rooted tree such that
\begin{itemize}
\item it has $m$ leaves, each labeled with a distinct non-zero natural number;
\item each non-leaf vertex $v$ has $k_v\geq 3$ outgoing edges, and it is labeled with the symbol $q^i$ where $i\in\{1,\dotsc,k_v-2\}$.
\end{itemize}
By convention, the graph with one vertex and no edges is an admissible tree on one leaf (actually the only one). The \emph{degree} of an admissible tree is the sum of the exponents of the labels of the non-leaf vertices. Denote by $\lambda(q,t)$ the generating function of the admissible trees, i.e.\ the series whose coefficient of $q^i t^k/k!$ counts the number of admissible trees of degree $i$ on $k$ leaves (see~\cite{gaiffiselecta,yuzvinsky}).
\end{defn}
\begin{defn}
An \emph{admissible forest} on $n$ leaves is the disjoint union of admissible trees such that the sets of labels of their leaves form a partition of $\{1,\dotsc,n\}$. The \emph{degree} of an admissible forest is the sum of the degrees of its connected components.
\end{defn}
Let us consider the graphic toric arrangement associated with $K_{n,m}$ and let $\GG$ be building set of Definition~\ref{def:blockbuildingset}. Let $\art_{n-1}$ and $\art_{m-1}$ be the two toric graphic arrangements associated with $K_n$ and $K_m$ respectively; it is easy to show that every $\GG$-nested set $\NS$ can be uniquely written as the disjoint union of a $\mathcal{F}_{n-1}$-nested set $\NS_1$ and a $\mathcal{F}_{m-1}$-nested set $\NS_2$, where $\mathcal{F}_{n-1}$ and $\mathcal{F}_{m-1}$ are the ``building sets of irreducible elements'' for $\art_{n-1}$ and $\art_{m-1}$ (see~\cite[Section~5.1]{gps}), and on the other hand every such union gives rise to a $\GG$-nested set. Moreover, there is a bijection $\Psi$ between the set of $\GG$-admissible functions and the set
\[
\{(f_1,f_2)\st f_1\textrm{ is } \mathcal{F}_{n-1}\textrm{-admissible, }f_2\textrm{ is } \mathcal{F}_{m-1}\textrm{-admissible} \}
\]
given by $\Psi(f)=(f\rest{\NS_1},f\rest{\NS_2})$ if $\supp f=\NS_1\sqcup\NS_2$ as above. In other words, there is a grade-preserving bijection between the set of $\GG$-admissible functions and the set of pairs $(F_1,F_2)$ of admissible forests on $n$ and $m$ leaves respectively (where $\deg(F_1,F_2)\coloneqq \deg F_1+\deg F_2$). Finally, it is not hard to prove that, given a $\GG$-admissible function $f$ corresponding to the pair $(F_1,F_2)$, the fan $\Delta(f)$ is of the form $\Delta_{\ell-1}$, where $\ell$ is the total number of connected components of $F_1$ and $F_2$, therefore there is a bijection between $\mathcal{B}_f$ and the permutations in $S_\ell$ that is grade-preserving provided that we choose any Eulerian statistic on $S_\ell$ as the degree of a permutation. In conclusion, a basis for the $\ZZ$-cohomology of $\Yt(K_{n,m},\GG)$ is in one-to-one correspondence with the set  of triples $(F_1,F_2,\sigma)$ where $F_1$ is an admissible forest on $n$ leaves, $F_2$ is an admissible forest on $m$ leaves, and $\sigma$ is a permutation in $S_\ell$ as above. This proves that, if we define the (exponential) generating function of the family
\[
\Phi^T(q,x,y):=\sum_{n,m\geq 1}\poin(\Yt(K_{n,m},\GG),q)\frac{x^{n}}{n!}\frac{y^{m}}{m!},
\]
we have that
\begin{align}
\Phi^T(q,x,y)&{}=\sum_{\ell_1,\ell_2\geq 1}\poin(X_{\Delta_{\ell_1+\ell_2-1}},q)\frac{\lambda^{\ell_1}(q,x)}{\ell_1!}\frac{\lambda^{\ell_2}(q,y)}{\ell_2!}\nonumber\\
&{}=\sum_{\ell_1,\ell_2\geq 1}\frac{A_{\ell_1+\ell_2}(q)}{q}\frac{\lambda^{\ell_1}(q,x)}{\ell_1!}\frac{\lambda^{\ell_2}(q,y)}{\ell_2!},\label{eq:final_toric}
\end{align}
where $A_\ell(q)$ is the $\ell$-th Eulerian polynomial, defined as
\[
A_\ell(q)\coloneqq\begin{cases}
{\displaystyle \sum_{k=1}^\ell A(\ell,k)q^k}, & \ell\geq 1,\\
1, & \ell=0
\end{cases}
\]
(see~\cite{comtet}; $A(\ell,k)$ is the number of permutations in $S_\ell$ with $k-1$ descents, for $\ell\geq 1$ and $1\leq k\leq \ell$).
\begin{rmk}
In the previous formula, as well as from now on, $q$ is a polynomial variable with degree 2.
\end{rmk}

We now write a series similar to~\eqref{eq:final_toric} in the hyperplane scenario. Let $\susp{K}_{n,m}$ be the cone of $K_{n,m}$, and let $\susp{\GG}$ the building set as in Definition~\ref{def:blockbuildingset}. As usual, we identify an element $G\in\susp{\GG}$ with a subset of $\{0,1,\dotsc,n+m\}$, i.e.\ the single non-singleton block of the partition $\pi_G$. In this case the blocks relative to the elements of $\susp{\GG}$ can only be of one of the following types:
\begin{enumerate}
\item a subset of $\{1,\dotsc,n\}$;
\item a subset of $\{n+1,\dotsc,n+m\}$;
\item any subset containing $0$ of cardinality at least 2.
\end{enumerate}
In particular, in any $\susp{\GG}$-nested set $\NS$ the elements of type (3) form a linear chain with respect to the inclusion, and the set $\NS\setminus\{\textrm{elements of }\NS\textrm{ of type (3)}\}$ can be written as a disjoint union of a $\mathcal{F}_{n-1}$- and a $\mathcal{F}_{m-1}$-nested sets.

Once again, the concept of admissible forests can be used to describe the $\susp{\GG}$-admissible functions, albeit with a slight modification.
\begin{defn}
A \emph{special admissible forest} of type $(n,m)$ is an admissible forest on $n+m+1$ leaves, numbered from $0$ to $n+m$, such that
\begin{itemize}
\item its connected components not containing the leaf $0$ are admissible trees with leaf labels either contained in $\{1,\dotsc,n\}$ or contained in $\{n+1,\dotsc,n+m\}$;
\item the connected component containing the leaf $0$ has the following property: all the connected components of the graph obtained by removing the leaf $0$ and all the nodes that have 0 among their descendants are admissible trees with leaf labels either contained in $\{1,\dotsc,n\}$ or contained in $\{n+1,\dotsc,n+m\}$.
\end{itemize}
\end{defn}
In fact, a special admissible forest defines a $\susp{\GG}$-admissible function $f$ in the following way: each internal node $v$ represents an element $G_v\in\susp{\GG}$ (in particular, $\pi_{G_v}$ is the set of the labels of the leaves descending from $v$); the set $\NS=\{G_v\st v\textrm{ internal node}\}$ is a $\susp{\GG}$-nested set which is the support set of $f$; if $q^i$ is the label associated with the node $v$ then $f(G_v)=i$. 
In conclusion, a basis for the $\ZZ$-cohomology of $\Yh(\susp{K}_{n,m},\susp{\GG})$ is in grade-preserving one-to-one correspondence with the set of special admissible forests of type $(n,m)$. Therefore we can study the (exponential) generating function
\[
\Phi^H(q,x,y):=\sum_{n,m\geq 1}\poin(\Yh(\susp{K}_{n,m},\susp{\GG}),q)\frac{x^{n}}{n!}\frac{y^{m}}{m!},
\]
which is equal to
\[
\sum_{n,m\geq 1}\sum_{F}q^{\deg F}\frac{x^n}{n!}\frac{y^m}{m!}
\]
where $F$ varies in the set of special admissible forests of type $(n,m)$.

Now notice that a special admissible forest of type $(n,m)$ can be obtained from two (regular) admissible forests $F_1$ and $F_2$ on $n$ and $m$ leaves respectively (where we relabel the leaves of the second one with the numbers $n+1,\dotsc,n+m$) by choosing which of their trees are ``attached'' to the leaf $0$ and how. This information is given by a permutation in $S_{\ell_1+\ell_2}$, where $\ell_1$ and $\ell_2$ are the number of trees of $F_1$ and $F_2$ (see Figure~\ref{fig:AdmissibleForests}). To show how, we recall some definitions taken mainly from~\cite{foatahan,gessel}. 

Given an ordered list of distinct numbers (not necessarily a permutation), say $\sigma=[\sigma_1,\ldots,\sigma_N]$, we denote by $\inv(\sigma)$ the set of inversions of $\sigma$:
\[
\inv(\sigma):=\{(i,j)\mid 1\leq i<j\leq N,\ \sigma_i>\sigma_j\}.
\]
\begin{defn}
A \emph{hook} is an ordered list of distinct non-zero natural numbers $\eta=[t_1,\dotsc,t_h]$, with $h\geq 2$, such that $t_1>t_2$ and $t_2<t_3<\dotsb<t_h$ (this second condition applies only for $h\geq 3$).
\end{defn}
\begin{rmk}\label{rmk:hook}
\begin{enumerate}
\item Given $s$ numbers $1\leq j_1<\dotsb<j_s\leq N$ and $i\in \{1,\ldots,s-1\}$
there is a unique way to sort $\{j_1,\ldots,j_s\}$ so that they form a hook with exactly $i$ inversions, namely $[j_{i+1},j_1,\ldots,j_i,j_{i+2},\ldots,j_s]$.
\item There exists a unique way to write a list of distinct numbers $\sigma$ as a concatenation $\sigma=p\thinspace\eta_1\dotsm\eta_k$ where each $\eta_i$ is a hook and $p$ is a list of increasing numbers. This is called the \emph{hook factorization} of $\sigma$. Notice that it is possible to have $k=0$, if $\sigma$ is an increasing sequence; also it may happen that $p=\emptyset$ ($\sigma=[3,1,2]$ is an example with $k=1$).
\end{enumerate}
\end{rmk}
\begin{defn}
Let $\sigma$ be a list of distinct numbers. The statistic \emph{lec} is defined as
\[
\lec(\sigma)=\sum_{i=1}^{k}\card{\inv(\eta_i)}
\]
where $p\thinspace\eta_1\dotsb\eta_k$ is the hook factorization of $\sigma$.
\end{defn}
\begin{lemma}
There exists a one-to-one correspondence between the set of triples $(F_1,F_2,\sigma)$ where
\begin{itemize}
\item $F_1$ is an admissible forest on $n$ leaves and with $\ell_1$ trees;
\item $F_2$ is an admissible forest on $m$ leaves and with $\ell_2$ trees;
\item $\sigma$ is a permutation in $S_{\ell_1+\ell_2}$;
\end{itemize}
and the set of special admissible forests of type $(n,m)$. This correspondence is grade-preserving provided that we define $\deg(F_1,F_2,\sigma)\coloneqq\deg(F_1)+\deg(F_2)+\lec(\sigma)$.
\end{lemma}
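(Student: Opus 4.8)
The plan is to write the correspondence down explicitly, using the hook factorization of $\sigma$ to record how the trees of $F_1$ and $F_2$ are grafted onto the connected component of the special forest that contains the leaf $0$. First I would pass from a special admissible forest $F$ of type $(n,m)$ to a triple. In the component $C$ of $F$ containing $0$, there is a unique path from the root to the leaf $0$; let $v_1,\dots,v_k$ be its internal vertices, ordered from the root towards $0$ (the case $k=0$, i.e.\ $0$ an isolated leaf, is allowed). Deleting $0$ and the $v_j$'s breaks $C$ into admissible trees, and—by the defining property of a special admissible forest—these, together with the components of $F$ other than $C$, are admissible trees whose leaf labels lie entirely in $\{1,\dots,n\}$ or entirely in $\{n+1,\dots,n+m\}$. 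Assembling the trees of the first kind gives an admissible forest $F_1$ on $n$ leaves (say with $\ell_1$ trees), those of the second kind an admissible forest $F_2$ on $m$ leaves (with $\ell_2$ trees, after relabelling $n+i\mapsto i$). Fix the total order on these $\ell_1+\ell_2$ trees in which the trees of $F_1$ come first, then those of $F_2$, each group ordered by smallest leaf label, and index them $\tau_1,\dots,\tau_{\ell_1+\ell_2}$ accordingly; the $\tau$'s that are components of $F$ (not grafted onto $C$), read in increasing index order, form an increasing word $p$. Each spine vertex $v_j$ is labelled $q^{i_j}$ and, besides its spine child ($v_{j+1}$, or $0$ if $j=k$), has $c_j=k_{v_j}-1\ge 2$ further children, which are roots of some of the $\tau$'s; since $1\le i_j\le k_{v_j}-2=c_j-1$ is exactly the range of inversion numbers of a hook on $c_j$ letters, Remark~\ref{rmk:hook}(1) yields a unique hook $\eta_j$ on the set of indices of these children with exactly $i_j$ inversions. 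I then set $\sigma\coloneqq p\,\eta_1\cdots\eta_k$.

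The inverse map is the evident reconstruction: given $(F_1,F_2,\sigma)$, relabel the leaves of $F_2$ by $n+1,\dots,n+m$, index the $\ell_1+\ell_2$ trees as above, take the hook factorization $\sigma=p\,\eta_1\cdots\eta_k$, build a spine $v_1\to\dots\to v_k$ with $v_j$ labelled $q^{\card{\inv(\eta_j)}}$ and with the trees indexed by the letters of $\eta_j$ attached to $v_j$ as additional children, hang the leaf $0$ off $v_k$, and keep the trees indexed by the letters of $p$ as separate connected components. One checks directly that this is a special admissible forest: each $v_j$ has $\card{\eta_j}+1\ge 3$ children and its label exponent $\card{\inv(\eta_j)}$ lies in $\{1,\dots,\card{\eta_j}-1\}=\{1,\dots,k_{v_j}-2\}$; deleting $0$ and the spine returns the admissible trees of $F_1$ and $F_2$, whose leaf labels lie in the required blocks; and $k=0$ corresponds to $0$ being isolated. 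The two uniqueness statements in Remark~\ref{rmk:hook} then make both maps forced, hence mutually inverse.

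The hard part will be checking that $p\,\eta_1\cdots\eta_k$ is genuinely \emph{the} hook factorization of the word $\sigma$, i.e.\ that the canonical left-to-right factorization recuts $\sigma$ into exactly these blocks in this order; only then do we get $\lec(\sigma)=\sum_j\card{\inv(\eta_j)}$ and the self-consistency of the two maps. This is where the bookkeeping conventions—the order chosen for the spine vertices and the indexing of the $\tau$'s—must be pinned down precisely (for instance one may need to fix that $p$ is the maximal increasing prefix produced and that each $\eta_j$ is read maximally), and it is the only genuinely delicate point; I expect it to be the main obstacle, everything else being routine. Granting it, the degree statement is immediate: the internal vertices of $F$ are precisely the internal vertices of all the $\tau_i$ together with $v_1,\dots,v_k$, so $\deg F=\sum_i\deg\tau_i+\sum_j\card{\inv(\eta_j)}=\deg F_1+\deg F_2+\lec(\sigma)$, which is exactly the asserted grade-preserving bijection.
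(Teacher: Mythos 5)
Your construction is essentially the paper's proof: the paper records only the forward algorithm (triple $\to$ special forest), with exactly your conventions — trees of $F_1$ before those of $F_2$, each group ordered by minimal leaf label, and the hooks of the factorization assigned to the spine vertices from the one adjacent to the leaf $0$ up to the root — and your inverse map and degree count match it. The one step you defer as "the main obstacle" is in fact immediate: the word you build is by construction an increasing prefix followed by hooks, so the uniqueness assertion in Remark~\ref{rmk:hook}(2) forces it to be \emph{the} hook factorization, whence $\lec(\sigma)=\sum_j\card{\inv(\eta_j)}$ with no further bookkeeping.
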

\begin{proof}
We only show the algorithm that associates a triple $(F_1,F_2,\sigma)$ with a special admissible forest, which is similar to the one described in~\cite{gps}.

\noindent\textbf{Preliminary step.} We define a total order on the set of trees of $F_1$ and $F_2$ in the following way: given two trees $\tau$, $\tau'$, we say that $\tau<\tau'$ if
\begin{enumerate}
\item $\tau$ belongs to $F_1$ and $\tau'$ belongs to $F_2$;
\item if $\tau$ and $\tau'$ belong to the same forest, the minimum label of the leaves in $\tau$ is smaller than the minimum label of the leaves in $\tau'$.
\end{enumerate}
Let $\{\tau_1,\dotsc,\tau_{\ell_1}\}$ and $\{\tau_{\ell_1+1},\dotsc,\tau_{\ell_1+\ell_2}\}$ be the trees of $F_1$ and $F_2$ ordered this way. Moreover, write $\sigma$ as an ordered list $[\sigma(1),\dotsc,\sigma(\ell_1+\ell_2)]$ and let $p\thinspace\eta_1\dotsb\eta_k$ be its hook factorization.

\noindent\textbf{Step 1.} Consider the last hook $\eta_k$ and let $i_1=\card{\inv(\eta_k)}$. Create a tree $\tau_0^{(1)}$ with a new internal vertex, labelled with $q^{i_1}$, to which the roots of the trees $\{\tau_j\st j\in\eta_k\}$ are attached as well as the zero-labelled leaf.

\noindent\textbf{Step 2.} Consider the second-to-last hook $\eta_{k-1}$ and let $i_2=\card{\inv(\eta_{k-1})}$. Create a tree $\tau_0^{(2)}$ with a new internal vertex, labelled with $q^{i_2}$, to which the roots of the trees $\{\tau_j\st j\in\eta_{k-1}\}$ are attached as well as the root of $\tau_0^{(1)}$.

\noindent\textbf{Next steps.} Continue with the other hooks, considering them from the last to the first. The part $p$ of the hook factorization, if present, determines the trees that will not be attached to $\tau_0$.
\end{proof}
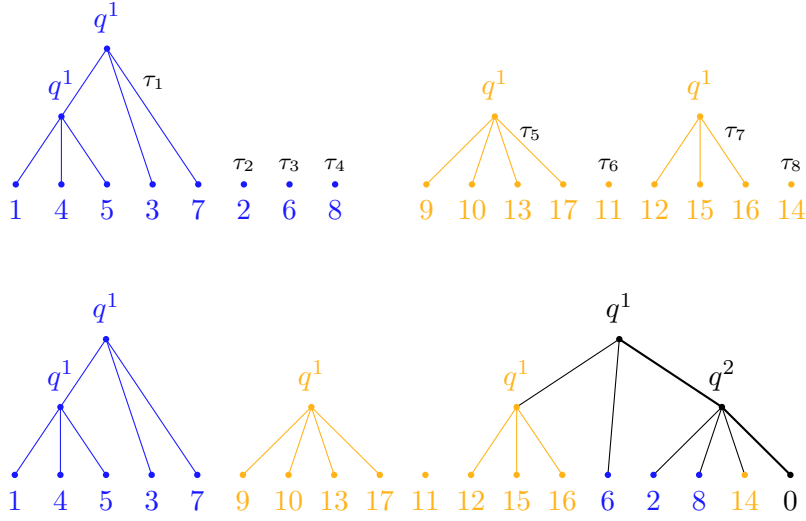
\begin{figure}
\centering
\begin{tikzpicture}[scale=0.6]
\def\quota{1.5}
\definecolor{graph1}{RGB}{25,25,255}
\definecolor{graph2}{RGB}{255,179,25}

\foreach \x/\lab in {1/1,2/4,3/5,4/3,5/7,6/2,7/6,8/8} {
\fill[graph1] (\x,0) circle [radius=0.07] node[below=0.5ex] {$\lab$};
}
\draw[graph1] (2,\quota) -- (1,0);
\draw[graph1] (2,\quota) -- (2,0);
\draw[graph1] (2,\quota) -- (3,0);
\draw[graph1] (3,2*\quota) -- (2,\quota);
\draw[graph1] (3,2*\quota) -- (4,0);
\draw[graph1] (3,2*\quota) -- (5,0);
\fill[graph1] (2,\quota) circle [radius=0.07] node[above=0.5ex] {$q^1$};
\fill[graph1] (3,2*\quota) circle [radius=0.07] node[above=0.5ex] {$q^1$};

\foreach \x/\lab in {1/9,2/10,3/13,4/17,5/11,6/12,7/15,8/16,9/14} {
\fill[graph2] (9+\x,0) circle [radius=0.07] node[below=0.5ex] {$\lab$};
}
\draw[graph2] (11.5,\quota) -- (10,0);
\draw[graph2] (11.5,\quota) -- (11,0);
\draw[graph2] (11.5,\quota) -- (12,0);
\draw[graph2] (11.5,\quota) -- (13,0);
\draw[graph2] (16,\quota) -- (15,0);
\draw[graph2] (16,\quota) -- (16,0);
\draw[graph2] (16,\quota) -- (17,0);
\fill[graph2] (11.5,\quota) circle [radius=0.07] node[above=0.5ex] {$q^1$};
\fill[graph2] (16,\quota) circle [radius=0.07] node[above=0.5ex] {$q^1$};

\node[right=1ex,font=\footnotesize,inner sep=0pt] at (3.5,1.5*\quota) {$\tau_1$};
\node[above=1ex,font=\footnotesize,inner sep=0pt] at (6,0) {$\tau_2$};
\node[above=1ex,font=\footnotesize,inner sep=0pt] at (7,0) {$\tau_3$};
\node[above=1ex,font=\footnotesize,inner sep=0pt] at (8,0) {$\tau_4$};
\node[right=1ex,font=\footnotesize,inner sep=0pt] at (11.75,0.75*\quota) {$\tau_5$};
\node[above=1ex,font=\footnotesize,inner sep=0pt] at (14,0) {$\tau_6$};
\node[right=1ex,font=\footnotesize,inner sep=0pt] at (16.25,0.75*\quota) {$\tau_7$};
\node[above=1ex,font=\footnotesize,inner sep=0pt] at (18,0) {$\tau_8$};
\end{tikzpicture}\\[1.5\baselineskip]
\begin{tikzpicture}[scale=0.6]
\def\quota{1.5}
\definecolor{graph1}{RGB}{25,25,255}
\definecolor{graph2}{RGB}{255,179,25}

\draw[graph1] (2,\quota) -- (1,0);
\draw[graph1] (2,\quota) -- (2,0);
\draw[graph1] (2,\quota) -- (3,0);
\draw[graph1] (3,2*\quota) -- (2,\quota);
\draw[graph1] (3,2*\quota) -- (4,0);
\draw[graph1] (3,2*\quota) -- (5,0);
\fill[graph1] (2,\quota) circle [radius=0.07] node[above=0.5ex] {$q^1$};
\fill[graph1] (3,2*\quota) circle [radius=0.07] node[above=0.5ex] {$q^1$};

\draw[graph2] (7.5,\quota) -- (6,0);
\draw[graph2] (7.5,\quota) -- (7,0);
\draw[graph2] (7.5,\quota) -- (8,0);
\draw[graph2] (7.5,\quota) -- (9,0);
\fill[graph2] (7.5,\quota) circle [radius=0.07] node[above=0.5ex] {$q^1$};

\draw[graph2] (12,\quota) -- (11,0);
\draw[graph2] (12,\quota) -- (12,0);
\draw[graph2] (12,\quota) -- (13,0);
\draw (16.5,\quota) -- (15,0);
\draw (16.5,\quota) -- (16,0);
\draw (16.5,\quota) -- (17,0);
\draw[thick] (16.5,\quota) -- (18,0);
\draw (14.25,2*\quota) -- (12,\quota);
\draw (14.25,2*\quota) -- (14,0);
\draw[thick] (14.25,2*\quota) -- (16.5,\quota);
\fill[graph2] (12,\quota) circle [radius=0.07] node[above=0.5ex] {$q^1$};
\fill (16.5,\quota) circle [radius=0.07] node[above=0.5ex] {$q^2$};
\fill (14.25,2*\quota) circle [radius=0.07] node[above=0.5ex] {$q^1$};

\fill[graph1] (1,0) circle [radius=0.07] node[below=0.5ex] {$1$};
\fill[graph1] (2,0) circle [radius=0.07] node[below=0.5ex] {$4$};
\fill[graph1] (3,0) circle [radius=0.07] node[below=0.5ex] {$5$};
\fill[graph1] (4,0) circle [radius=0.07] node[below=0.5ex] {$3$};
\fill[graph1] (5,0) circle [radius=0.07] node[below=0.5ex] {$7$};
\fill[graph2] (6,0) circle [radius=0.07] node[below=0.5ex] {$9$};
\fill[graph2] (7,0) circle [radius=0.07] node[below=0.5ex] {$10$};
\fill[graph2] (8,0) circle [radius=0.07] node[below=0.5ex] {$13$};
\fill[graph2] (9,0) circle [radius=0.07] node[below=0.5ex] {$17$};
\fill[graph2] (10,0) circle [radius=0.07] node[below=0.5ex] {$11$};
\fill[graph2] (11,0) circle [radius=0.07] node[below=0.5ex] {$12$};
\fill[graph2] (12,0) circle [radius=0.07] node[below=0.5ex] {$15$};
\fill[graph2] (13,0) circle [radius=0.07] node[below=0.5ex] {$16$};
\fill[graph1] (14,0) circle [radius=0.07] node[below=0.5ex] {$6$};
\fill[graph1] (15,0) circle [radius=0.07] node[below=0.5ex] {$2$};
\fill[graph1] (16,0) circle [radius=0.07] node[below=0.5ex] {$8$};
\fill[graph2] (17,0) circle [radius=0.07] node[below=0.5ex] {$14$};
\fill (18,0) circle [radius=0.07] node[below=0.5ex] {$0$};
\end{tikzpicture}
\caption{Top: two admissible forests on 8 and 9 leaves respectively, each with four connected components. Bottom: the special admissible forest of type $(8,9)$ that can be obtained from the two forests above and the permutation $\sigma=[1,5,6,7,3,8,2,4]$.}
\label{fig:AdmissibleForests}
\end{figure}
The key observation is that, if we define the bivariate exponential generating function of the lec statistic
\[
\mathcal{L}(q,x,y)\coloneqq\sum_{k_1,k_2\geq 1}\left(\sum_{\sigma\in S_{k_1+k_2}}q^{\lec(\sigma)}\right)\frac{x^{k_1}}{k_1!}\frac{y^{k_2}}{k_2!},
\] 
then the coefficient of $q^dx^{\ell_1}y^{\ell_2}/(\ell_1!\ell_2!)$ in $\mathcal{L}(q,\lambda(q,x),\lambda(q,y))$ counts exactly the triples $(F_1,F_2,\sigma)$ as above and such that $\deg(F_1,F_2,\sigma)=d$. As a consequence, we have that
\begin{equation}\label{eq:final_hyper}
\Phi^H(q,x,y)=\sum_{\ell_1,\ell_2\geq 1} \sum_{\sigma\in S_{\ell_1+\ell_2}}q^{\lec(\sigma)} \frac{\lambda^{\ell_1}(q,x)}{\ell_1!}\frac{\lambda^{\ell_2}(q,y)}{\ell_2!}.
\end{equation}
By Theorem~\ref{thm:main} we know that the two series $\Phi^T(q,x,y)$ and $\Phi^H(q,x,y)$ are equal, and so are the two right-hand sides of~\eqref{eq:final_toric} and~\eqref{eq:final_hyper}. Now, $\lambda(q,t)$ is invertible with respect to the composition if viewed as a series in $\ZZ[q][[t]]$ (in fact $\lambda(q,t)=t+qt^3/3!+(q+q^2)t^4/4!+\dotsb$).
By composing by $\lambda^{-1}$ twice both~\eqref{eq:final_toric} and~\eqref{eq:final_hyper} we finally obtain
\[
\frac{A_{\ell}(q)}{q}=\sum_{\sigma\in S_\ell}q^{\lec(\sigma)}.
\]
%
%
In other words, the isomorphism between $\Yt(K_{n,m},\GG)$ and $\Yh(\susp{K}_{n,m},\susp{\GG})$ provides yet another proof of the fact that the lec statistic is Eulerian.

\section*{Acknowledgements}
All of the authors thank the Italian National Group for Algebraic and Geometric Structures and their Applications (GNSAGA--INdAM) for its support to their research. The work of V.~Siconolfi has also been partially funded by the European Union under the Italian National Recovery and Resilience Plan (NRRP) of NextGenerationEU, partnership on ``Telecommunications of the Future'' (PE00000001 -- program ``RESTART'', CUP: D93C22000910001) and by the Italian Ministry of University and Research under the Programme ``Department of Excellence'' Legge 232/2016 (CUP: D93C23000100001).

\printbibliography

\end{document}